\newtheorem{example}{Example}
\newtheorem{conjecture}[thm]{Conjecture}
\newcommand{\cA}{\mathcal{A}}
\newcommand{\Log}{\operatorname{Log}}
\newcommand{\R}{\mathbb{R}}
\newcommand{\C}{\mathbb{C}}
\newcommand{\fcc}{\mbox{fcc}}
\newcommand{\ignore}[1]{}
\newcommand{\cS}{\mathbf{S}}
\newcommand{\cT}{\mathbf{T}}
\newcommand{\cU}{\mathbf{U}}
\newcommand{\cP}{\mathbf{P}}
\newcommand{\cQ}{\mathbf{Q}}
\DeclareMathOperator{\adim}{adim}
\begin{document}

\title{The amoeba dimension of a linear space}
\author{Jan Draisma}
\address{Mathematical Institute, University of Bern, Sidlerstrasse 5, 3012 Bern, Switzerland; and Department of Mathematics and Computer Science, Eindhoven University of Technology, P.O.~Box 513, 5600 MB Eindhoven, The Netherlands}
\email{jan.draisma@unibe.ch}

\author{Sarah Eggleston}
\address{Institut f\"ur Mathematik, Albrechtstra{\ss}e 28a, 49076 Osnabr\"uck, Germany}
\email{sarah.eggleston@uni-osnabrueck.de}

\author{Rudi Pendavingh}
\address{Department of Mathmematics and Computer Science, Eindhoven University of Technology, P.O.~Box 513, 5600 MB Eindhoven, The Netherlands}
\email{r.a.pendavingh@tue.nl}

\author{Johannes Rau}
\address{Departamento de Matem\'aticas, Universidad de los Andes,  Carrera 1 \# 18A - 12, 111711 Bogot\'a, Colombia}
\email{j.rau@uniandes.edu.co}

\author{Chi Ho Yuen}
\address{Department of Mathematical Sciences, University of Copenhagen, Universitetsparken 5, 2100 Copenhagen, Denmark}
\email{chy@math.ku.dk}

\thanks{JD was partially supported by Swiss National Science Foundation (SNSF) project grant 200021\_191981 and by Vici grant 639.033.514 from the Netherlands Organisation for Scientific Research (NWO).
JR was supported by the FAPA project ``Matroids in tropical geometry'' from the Facultad de Ciencias, Universidad de los Andes, Colombia.
CHY was supported by the Trond Mohn Foundation project ``Algebraic and Topological Cycles in Complex and Tropical Geometries''; he also acknowledges the support of the Centre for Advanced Study (CAS) in Oslo, Norway, which funded and hosted the Young CAS research project ``Real Structures in Discrete, Algebraic, Symplectic, and Tropical Geometries'' during the 2021/2022 and 2022/2023 academic years}

\begin{abstract}
Given a complex vector subspace $V$ of $\mathbb{C}^n$, the dimension of the amoeba of $V \cap (\mathbb{C}^*)^n$ depends only on the matroid that $V$ defines on the ground set $\{1,\ldots,n\}$. Here we prove that this dimension is given by the minimum of a certain function over all partitions of the ground set, as previously conjectured by Rau.
We also prove that this formula can be evaluated in polynomial time.
\end{abstract}

\maketitle

\section{Introduction} 

\subsection{The goal}
Let $V \subseteq \CC^n$ be a complex vector subspace, and set $X:=V \cap (\CC^*)^n$. We assume throughout that $X \neq \emptyset$, i.e., that $V$ is not contained in any coordinate hyperplane. We denote by $\cA(X) \subseteq \RR^n$ the {\em amoeba} of $X$, defined as the image $\Log(X)$ of $X$
under the map
\[ \Log:(\CC^*)^n \to \RR^n,\ z=(z_1,\ldots,z_n) \mapsto
(\log(|z_1|),\ldots,\log(|z_n|)). \]
An open dense subset of $\cA(X)$ (in the Euclidean topology) is a real
manifold, and we write $\dim_\RR \cA(X)$ for the dimension of this
manifold. We will derive a purely combinatorial expression for
$\dim_\RR \cA(X)$. This expression depends only on the {\em
matroid} $M_V$ that
$V$ defines on the ground set $[n]:=\{1,\ldots,n\}$ (see below), and in
fact makes sense for any matroid $M$ on that ground set, not just those
represented by some complex vector space. Furthermore, we derive an algorithm for computing the said expression that uses a
polynomial number of rank evaluations in $M$. In particular, if $V$ is
given as the row space of an input matrix with, for example, entries
in the field $\QQ(i)$ of Gaussian rationals, then $\dim_\RR \cA(X)$ can be computed in polynomial time in the
bit-length of the input.

\subsection{The matroid of $V$}
For $S \subseteq [n]$ we denote by $r_V(S)$ the dimension
of the image of $V$ under the projection $\CC^n \to \CC^S$. 
More explicitly,
if $d:=\dim_\CC V$ and $V$ is presented as the row space of a $d \times
n$-matrix $A$, then $r_V(S)$ is the dimension of the span of the
columns of $A$ labelled by $S$.  This is the rank function of the 
matroid $M_V$ defined by $V$. The assumption that $X \neq \emptyset$
is equivalent to the condition that $M_V$ has no loops, and
to the condition that $A$ has no zero column.

\subsection{Main results} \label{ssec:Main}

We will prove the following formula, first conjectured by Rau
\cite{Rau20}. Throughout the paper, a {\em partition} of a finite set
$S$ is a set of pairwise disjoint, nonempty subsets of $S$,
called the {\em parts} of the partition, whose union is $S$.

\begin{thm} \label{thm:Main}
We have 
\begin{equation} \label{eq:Main} 
\dim_\RR \cA(X)=\min \sum_{i=1}^k (2 \, r_V(P_i)-1),
\end{equation}
where the minimum is over all partitions
$\{P_1,\ldots,P_k\}$ of $[n]$.
\end{thm}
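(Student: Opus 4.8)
The plan is to prove the two inequalities in \eqref{eq:Main} separately. For the upper bound $\dim_\RR \cA(X) \le \sum_{i=1}^k (2\,r_V(P_i)-1)$ for every partition $\{P_1,\ldots,P_k\}$, I would exploit the fact that $\cA(X)$ is contained in the image of $X$ under $\Log$ composed with the "block" projection, together with the dilation action. More precisely, $X$ maps into the product $\prod_i \pi_{P_i}(V)\cap(\CC^*)^{P_i}$, and the amoeba of a linear space in $(\CC^*)^{P_i}$ has real dimension at most $2\,r_V(P_i)-1$: indeed $\pi_{P_i}(V)$ has complex dimension $r_V(P_i)$, so its intersection with the torus has real dimension $2\,r_V(P_i)$, and because $\Log$ is invariant under the diagonal circle action $z\mapsto e^{\sqrt{-1}\theta}z$ on each block, one real dimension is lost per block. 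Summing over the $k$ blocks gives the bound. I would make sure to phrase this as: $\cA(X)$ is contained in a sum of $k$ sets each of which is an amoeba of a linear space in a smaller torus, and apply the trivial dimension bound $\dim_\RR \cA(Y)\le 2\dim_\CC (\text{span})-1$ for any $Y$ with $\Log(Y)$ of full local dimension.

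For the lower bound, I would produce, for a \emph{well-chosen} partition achieving the minimum, a point of $X$ at which the differential of $\Log|_X$ has rank equal to $\sum_i(2\,r_V(P_i)-1)$; equivalently, I would directly estimate $\dim_\RR\cA(X)$ from below by a generic tangent-space computation. The standard description is: at a generic $z\in X$, the tangent space to $\cA(X)$ is spanned by $\operatorname{Re}$ and $\operatorname{Im}$ parts of vectors in $V$ after coordinatewise division by $z$; equivalently, $\dim_\RR\cA(X) = \dim_\RR\bigl(V_z + \sqrt{-1}\,\RR^n\bigr)/(\sqrt{-1}\,\RR^n)$ for generic $z$, where $V_z = \{v\,/\,z : v\in V\}$ and division is coordinatewise — i.e., the real dimension of the image of $V_z$ under projection modulo the imaginary axes. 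So the problem reduces to the combinatorial optimization: over generic scalings, how small can $\dim_\RR (\operatorname{pr}(V_z))$ be, where $\operatorname{pr}:\CC^n\to(\CC/\sqrt{-1}\RR)^n\cong\RR^n$? The key insight I expect to need is a matroidal analysis of this quantity: a complex-linear space $W$ (playing the role of $V_z$) projects to something of small real dimension precisely when $W$ has large "real part", and the obstruction is organized by a partition into blocks on which $W$ looks real after a common phase twist. I would aim to show that the minimum over scalings $z$ of $\dim_\RR(\operatorname{pr}(V_z))$ equals the partition minimum on the right of \eqref{eq:Main}, with the optimal partition grouping coordinates according to where a generic $V_z$ admits a consistent real structure.

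The heart of the argument — and the step I expect to be the main obstacle — is the lower bound, specifically showing that $\dim_\RR(\operatorname{pr}(V_z))$ cannot be pushed below the partition minimum for \emph{any} scaling $z$, not just generic ones, and tying the combinatorial extremizer to the matroid rather than to the specific representation. I would approach this by a duality/rank argument: decompose $\RR^n=\bigoplus_{i} \RR^{P_i}$ and, for a hypothetical scaling $z$ forcing a small real projection, build the corresponding partition from the blocks on which the phases of coordinates of vectors in $V_z$ are constrained to lie in a common line; then show $\dim_\RR(\operatorname{pr}(V_z)) \ge \sum_i (2 r_V(P_i) - 1)$ block by block, using that within a block the projection kills at most one real dimension while the complex rank there is $r_V(P_i)$. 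I would be careful that the blocks arising this way genuinely give a partition of $[n]$ (no coordinate left unassigned, using the no-loops hypothesis $X\ne\emptyset$) and that the inequality is additive across blocks, which should follow because the projection $\operatorname{pr}$ respects the direct sum decomposition. For the polynomial-time claim I would defer to a later section: the minimization of a submodular-type function $\sum_i(2 r_V(P_i)-1)$ over partitions is amenable to matroid-union / submodular minimization techniques, so it suffices to observe that $S\mapsto 2r_V(S)-1$ generates, via partitions, an optimization problem solvable with polynomially many rank oracle calls.
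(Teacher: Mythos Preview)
Your upper-bound argument is correct and essentially coincides with the paper's: embed $X$ in $\prod_i \pi_{P_i}(V)\cap(\CC^*)^{P_i}$ and use that each factor, being invariant under the diagonal $\CC^*$ (or circle) action, has amoeba of real dimension at most $2r_V(P_i)-1$.

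The lower-bound proposal, however, contains a genuine gap. First, the claim that $\dim_\RR\bigl(\mathrm{pr}(V_z)\bigr)$ stays at or above the partition minimum for \emph{every} scaling $z$ is false. Take $V=\CC\text{-span}\{(1,0,1),(0,1,1)\}\subseteq\CC^3$, so $M_V=U_{2,3}$ and the partition minimum equals $3$. At $z=(1,1,1)$ one has $V_z=V$, and $V\cap i\RR^3=\{(a,b,a+b):a,b\in i\RR\}$ is $2$-dimensional, so $\dim_\RR\Re(V)=4-2=2<3$. Only at \emph{generic} $z$ does the rank reach $3$. Since $\dim_\RR\cA(X)$ is the \emph{maximum} of the rank of $d_z\Log$, you only need the bound at one (generic) point, and trying to prove it for all $z$ both overshoots and fails.

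Second, even restricting to generic $z$, your mechanism for extracting a partition (``blocks on which the phases are constrained to a common line'') is not specified, and a purely first-order analysis at a single point does not obviously yield one. The paper's proof hinges on a \emph{second-order} step you are missing: at a generic point (normalized to $\one$), one differentiates the map $p\mapsto d_p\Log$ and uses that, since the rank is locally constant at its maximum $m$, the derivative must send $\ker(d_\one\Log)$ into $\im(d_\one\Log)$. Unwinding this yields the algebraic consequence that $V+\RR^n$ is closed under coordinatewise (Hadamard) multiplication by $V\cap\RR^n$. When $m<2d-1$ one picks $v\in V\cap\RR^n$ independent of $\one$, iterates Hadamard multiplication by (a normalized) $v$ and passes to the limit to obtain a product decomposition $V+\RR^n=(V_1+\RR^{P_1})\times(V_2+\RR^{P_2})$ along a nontrivial bipartition $\{P_1,P_2\}$; this forces $\dim_\RR\cA(X)=\dim_\RR\cA(X_1)+\dim_\RR\cA(X_2)$, and induction on $n$ finishes. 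Your outline has the right first-order picture but lacks this Hadamard-closure step, without which there is no evident way to produce the splitting partition.
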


\begin{example} \label{ex:trivial}
The two obvious upper bounds $2d-1$ (see Lemma~\ref{lm:2dmin1}) and $n$ (the dimension of the ambient space) can be recovered by choosing the trivial partition and the partition into singletons, respectively.
More generally, if the matroid has connected components
$M_1,\ldots,M_k$, then choosing the partition consisting of
their respective ground sets shows that $\dim_\RR \cA(X)
\leq 2d-k$.
\end{example}

The following example, due to Mounir Nisse, shows that the $\dim_\RR \cA(X)$ can drop below $\min\{n,2d-1\}$ even if $M_V$ is connected.

\begin{example}\label{ex:nisse}
Consider the $4\times 7$ matrix
$$
\begin{pmatrix}
1 & 0 & 0 & 0 & * & * & *\\
0 & 1 & 0 & 0 & * & * & *\\
0 & 0 & 1 & 0 & 0 & 0 & *\\
0 & 0 & 0 & 1 & 0 & 0 & *
\end{pmatrix},
$$
where the stars represent any sufficiently general tuple of
complex numbers. 
The matroid represented by such a matrix is connected.
However, the partition of columns
$\{1,2,5,6\},\{3\},\{4\},\{7\}$ yields $\dim_\RR \cA(X)\leq (2\cdot 2-1)+3 \cdot (2\cdot 1-1)=6<\min\{7,2\cdot 4-1\}$.

\end{example}

The next result interprets the expression in the
theorem as the rank of a matroid. To this end, 
let $M$ be a loopless matroid on the finite set $E$, with rank function $r:2^E\rightarrow\mathbb{N}$. For subsets $S_1,\ldots, S_k\subseteq E$, put
$$ \tilde{r}(S_1,\ldots S_k):=\sum_{i=1}^k (2r(S_i)-1)$$ and define the function $r':2^{E}\rightarrow\mathbb{N}$ as
 \begin{equation}\label{rprime}
 r'(S):=\min \tilde{r}(P_1,\ldots, P_k)
 \end{equation}
where the minimum is over all partitions
$\{P_1,\ldots,P_k\}$ of $S$. 

\begin{thm}\label{matroid} Let $M$ be a loopless matroid on
$E$ with rank function $r$, and let $r'$ be defined as in
\eqref{rprime}. Then $r'$ is the rank function of a matroid
$M'$ on $E$.
\end{thm}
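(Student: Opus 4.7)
The plan is to verify that $r'$ satisfies the standard matroid rank axioms: $r'(\emptyset) = 0$, monotonicity, the unit-step property $r'(S\cup\{e\}) - r'(S) \in \{0,1\}$, and submodularity. The first three I would dispatch quickly: the empty partition shows $r'(\emptyset) = 0$; appending $\{e\}$ as a new singleton to an optimal partition of $S$ raises $\tilde r$ by exactly $2r(\{e\})-1=1$ (using looplessness of $M$), giving the unit upper bound; and for monotonicity $r'(S)\le r'(T)$ when $S\subseteq T$, take an optimal partition $\{Q_j\}$ of $T$, intersect each part with $S$, and discard empty pieces---every term $2r(\cdot)-1\ge 1$, so both shrinking and deleting parts weakly decrease $\tilde r$.

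The serious content is submodularity: $r'(A\cup B) + r'(A\cap B) \le r'(A) + r'(B)$. My approach is a bipartite intersection graph construction. Fix optimal partitions $\mathcal{P}=\{P_1,\dots,P_k\}$ of $A$ and $\mathcal{Q}=\{Q_1,\dots,Q_\ell\}$ of $B$, and form the bipartite graph $G$ on vertex set $\mathcal{P}\sqcup\mathcal{Q}$ with an edge $P_iQ_j$ exactly when $P_i\cap Q_j\neq\emptyset$. The connected components $C_1,\dots,C_m$ of $G$ define a partition $\mathcal{R}$ of $A\cup B$ via $U_s:=\bigcup_{v\in C_s} v$, while the edge intersections $\{P_i\cap Q_j:(P_i,Q_j)\in E(G)\}$ form a partition $\mathcal{W}$ of $A\cap B$. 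The target is the combined estimate $\tilde r(\mathcal{R}) + \tilde r(\mathcal{W}) \le \tilde r(\mathcal{P}) + \tilde r(\mathcal{Q})$, from which submodularity follows.

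The central ingredient is a \emph{tree lemma}, proved by induction on the number of vertices: for any spanning tree $T$ of a component $C_s$,
\[
r(U_s) + \sum_{(P,Q)\in E(T)} r(P\cap Q) \;\le\; \sum_{P\in C_s\cap\mathcal{P}} r(P) + \sum_{Q\in C_s\cap\mathcal{Q}} r(Q).
\]
In the inductive step, peel a leaf $P^*\in\mathcal{P}$ of $T$ with unique tree-neighbour $Q^*$, and apply submodularity of $r$ to $P^*$ and the union $U$ of the remaining vertex sets, giving $r(U\cup P^*)\le r(U)+r(P^*)-r(U\cap P^*)$; since $Q^*\subseteq U$, monotonicity yields $r(U\cap P^*)\ge r(P^*\cap Q^*)$, closing the induction. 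Doubling the tree lemma and distributing a $-1$ for each part and each tree-edge produces, per component, exactly the per-component version of the target inequality---\emph{provided} the component itself is a tree.

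The hard part is therefore handling components of $G$ that contain cycles, because each non-tree edge contributes a further block to $\mathcal{W}$ of cost at least $1$ which the tree-lemma budget does not absorb. My plan is a preprocessing lemma showing that the optimal partitions may be chosen so that $G$ is a forest. The key leverage here is a parity observation: a merge of two parts of $\mathcal{P}$ changes $\tilde r$ by an odd amount (so it cannot preserve optimality), whereas a single-element move between two parts of $\mathcal{P}$ changes $\tilde r$ by an even amount, so one can hope to find an optimality-preserving single-element move that eliminates any given cycle in $G$. Justifying that such a cycle-breaking move can always be produced---presumably by a careful combinatorial argument applied to a shortest cycle of $G$ and an appropriately chosen element in one of its edge intersections---is the technical heart of the proof. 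Once $G$ is a forest, summing the per-component tree bounds gives $\tilde r(\mathcal{R}) + \tilde r(\mathcal{W}) \le \tilde r(\mathcal{P}) + \tilde r(\mathcal{Q})$, and submodularity of $r'$ follows.
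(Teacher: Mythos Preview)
Your treatment of the easy axioms is fine, and the tree lemma is correct. The genuine gap is in the submodularity argument: you explicitly defer the key preprocessing step, namely that the optimal partitions $\mathcal{P}$ and $\mathcal{Q}$ can always be chosen so that the bipartite intersection graph $G$ is a forest. You offer a heuristic (parity of the change in $\tilde r$ under a merge versus a single-element move) but give no argument that an optimality-preserving, cycle-breaking single-element move exists for a given shortest cycle, nor that iterating such moves terminates without producing new cycles. This is not a routine verification; whether such a move exists depends on the rank function, and nothing you have written forces it. As the proposal stands, submodularity is not established.

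The paper sidesteps this difficulty entirely. Rather than committing to $\mathcal{W}=\{P_i\cap Q_j:(P_i,Q_j)\in E(G)\}$ and then trying to force $G$ to be a forest, it builds a suitable \emph{coarsening} of $\mathcal{W}$ directly. Starting from the multiset $\mathcal{S}:=\mathcal{P}\cup\mathcal{Q}$, one repeatedly replaces an intersecting pair $S,S'$ by the pair $S\cap S',\,S\cup S'$. Each such replacement preserves the total number of sets, preserves the finest common coarsening (so $\mathcal{P}\vee\mathcal{Q}$ is unchanged), and by submodularity of $r$ weakly decreases $\tilde r$. The process terminates in a cross-free multiset $\mathcal{T}$; removing the sets of $\mathcal{P}\vee\mathcal{Q}\subseteq\mathcal{T}$ leaves a partition $\mathcal{Q}'$ of $A\cap B$ with $\#\mathcal{Q}'=\#\mathcal{P}+\#\mathcal{Q}-\#(\mathcal{P}\vee\mathcal{Q})$, so the ``$-1$'' counts on both sides match automatically and $\tilde r(\mathcal{P}\vee\mathcal{Q})+\tilde r(\mathcal{Q}')\le\tilde r(\mathcal{P})+\tilde r(\mathcal{Q})$ follows at once. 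In effect, the uncrossing performs on the output partition of $A\cap B$ exactly the coarsening that your preprocessing would have tried to achieve on the input partitions, and it does so without any hypothesis on $G$.
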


Moreover, we will establish the following algorithmic result.

\begin{thm}\label{thm:Algorithm}
The rank function $r'$ of $M'$ can be evaluated on an input set $S
\subseteq E$ by a polynomial number of rank evaluations in
$M$.
\end{thm}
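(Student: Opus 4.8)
The plan is to compute $r'(S)$ by recognizing the minimization in \eqref{rprime} as a problem solvable by submodular function minimization, exploiting that $S \mapsto 2r(S)-1$ behaves, up to the constant $-1$ per part, like a submodular function. First I would reduce to the case $S = E$ by replacing $M$ with its restriction $M|_S$; note that $M|_S$ is again loopless and that a rank oracle for $M|_S$ is obtained from one for $M$ at no cost. So the task is to minimize $\sum_{i=1}^k (2r(P_i)-1)$ over partitions $\{P_1,\dots,P_k\}$ of the fixed ground set $E$.

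The key observation is that the ``$-1$ per part'' term couples only through the number of parts, so I would first fix the number of parts $k$ and minimize $\sum_{i=1}^k 2r(P_i)$ over ordered partitions $(P_1,\dots,P_k)$ of $E$ into $k$ nonempty blocks, then subtract $k$ and optimize over $k \in \{1,\dots,|E|\}$. For fixed $k$, the function $(P_1,\dots,P_k) \mapsto \sum_i r(P_i)$ is a sum of submodular functions evaluated on a partition, which is exactly the setting of \emph{submodular partition / minimum-cost partition} problems: equivalently, one can model this via a construction on $k$ disjoint copies of the ground set, or via the principal-partition / Dilworth-truncation machinery, so that an optimal partition into at most $k$ parts is found with a polynomial number of evaluations of $r$ (each evaluation of the submodular function being a single rank query). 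Allowing empty blocks (which contribute $0$ to $\sum 2r(P_i)$ but we would not count them toward $k$) turns the "exactly $k$" constraint into "at most $k$", which is the cleanest version; then $r'(E) = \min_{1 \le k \le |E|} \big(\min\{\sum_i 2r(P_i) : \text{partition into} \le k \text{ parts}\} - k\big)$ after checking that an optimum always uses a genuine partition (no empty parts, since removing an empty part does not change $\sum 2r(P_i)$ and decreases $k$, but $k$ enters with a $-$ sign, so one must be slightly careful — in fact adding a spurious empty part would \emph{decrease} the objective, so instead I phrase it as: for each target count $k$, compute $f(k) := \min\{\sum_{i=1}^k 2r(P_i)\}$ over partitions into exactly $k$ nonempty parts, which is a concave-like sequence amenable to the same submodular tools, and return $\min_k (f(k) - k)$).

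The concrete mechanism I would invoke is the classical result that the Dilworth truncation of a submodular function, and more generally minimum partition problems for submodular functions, are solvable in strongly polynomial time using submodular function minimization (Schrijver / Iwano–Fleischer–Fujishige, or Baïou–Barahona / Narayanan for the partition form); here the relevant submodular function is $S \mapsto 2r(S) - c$ for a suitable scalar $c$, and sweeping $c$ recovers the whole family $f(k)$. Each call to the SFM routine uses polynomially many evaluations of $2r(\cdot)$, hence polynomially many rank evaluations in $M$, and there are only polynomially many values of $k$ (or of the sweep parameter) to consider, so the total number of rank queries is polynomial in $|E|$.

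The main obstacle I anticipate is the bookkeeping around the ``$-1$ per part'': because the number of parts enters the objective with a negative coefficient, the minimization is genuinely a trade-off between coarse partitions (few parts, large $r(P_i)$) and fine partitions (many parts, small $r(P_i)$), and one must be sure that the standard partition-minimization subroutine can be steered to the right number of parts. I would resolve this either by the Lagrangian/parametric sweep described above — minimizing $\sum_i (2r(P_i) - \lambda)$ for varying $\lambda$, which for $\lambda = 1$ is exactly $r'$ and whose optimal partition structure changes only polynomially often as $\lambda$ varies — or, if a self-contained argument is preferred, by directly encoding the problem as a single submodular minimization on an auxiliary ground set that simulates the choice of parts. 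Verifying that this encoding is correct and that the number of oracle calls is polynomial is the technical heart of the proof; everything else is assembling known black boxes.
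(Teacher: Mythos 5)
Your approach is correct in essence and is genuinely different from the paper's own proof; in fact it coincides with an alternative route that the paper acknowledges in a closing remark attributed to an anonymous referee. You correctly identify that $r'(S) = \min_{\cP}\sum_{P\in\cP}(2r(P)-1)$ is the Dilworth truncation of the submodular function $f(P):=2r(P)-1$ and that this is evaluable in strongly polynomial time by known algorithms (Cunningham, and the treatment in Schrijver, Ch.~48). However, your execution is over-engineered: the detour through fixing the number of parts $k$ and the Lagrangian sweep over $\lambda$ is unnecessary, since the quantity you want is exactly the Dilworth truncation at the fixed shift $\lambda=1$, and the standard algorithms handle the ``only nonempty parts'' constraint automatically (your worry about spurious empty parts, while a reasonable thing to flag, is already built into the definition of partition used throughout). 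You could simply cite the Dilworth-truncation algorithm once and be done. The paper's own proof is a self-contained alternative: it first shows (Lemma~\ref{lattice}) that optimal partitions form a lattice, so there is a unique coarsest optimal partition; it then builds this partition incrementally, one ground-set element at a time (Algorithm~\ref{alg:matroid}), maintaining alongside it a set $B$ that is independent in $M'$ and spans the current set, and invoking a single bespoke submodular minimisation (Lemma~\ref{test}) whenever $|B|$ grows. What the paper's route buys is a concrete, implementable algorithm with an explicit $O(nk + k^3\log k)$ rank-oracle bound and, as a by-product, an algorithmic description of the coarsest optimal partition and a basis of $M'$, whereas your route buys brevity at the cost of treating the hard combinatorics as a black box — which you candidly admit is where the actual work would lie.
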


\begin{cor}
There exists an algorithm that, on input of a $d \times n$-matrix $A$ with
entries in $\QQ(i)$ and with no zero column, computes the amoeba
dimension of $V \cap (\CC^*)^n$, where $V$ is the complex row space of
$A$, in polynomial time in the bit-length of $A$.
\end{cor}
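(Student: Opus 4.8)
The plan is to simply assemble the three theorems above, adding the routine observation that rank computations over $\QQ(i)$ run in polynomial time. Concretely: let $V$ be the complex row space of $A$ and $X := V \cap (\CC^*)^n$; since $A$ has no zero column, $M_V$ is loopless and $X \ne \emptyset$, so Theorem~\ref{thm:Main} applies and gives
\[
\dim_\RR \cA(X) \;=\; r'([n]),
\]
the value at $S=[n]$ of the function $r'$ attached to $M := M_V$ in \eqref{rprime} (and, by Theorem~\ref{matroid}, the rank of $[n]$ in the matroid $M'$). Thus it suffices to exhibit a polynomial-time algorithm computing $r'([n])$.

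By Theorem~\ref{thm:Algorithm}, $r'([n])$ can be obtained from a number of rank evaluations $r_V(S)$ in $M_V$ that is polynomial in $n$. Since $A$ has $n$ columns, all nonzero, the bit-length of $A$ is at least $n$, so "polynomially many in $n$" is in particular polynomially many in the bit-length of $A$. Each such evaluation asks for $r_V(S) = \rank A[\,\cdot\,,S]$, the rank over $\QQ(i)$ of the column submatrix of $A$ indexed by $S$, whose bit-length is bounded by that of $A$.

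It remains to bound the cost of one rank evaluation. The rank of a matrix over $\QQ(i)$ can be computed by Gaussian elimination; the only point requiring care is the growth of numerators and denominators of the intermediate Gaussian rationals, which is controlled in the standard way by fraction-free (Bareiss-type) elimination together with Hadamard's inequality, bounding every intermediate minor by a quantity of bit-length polynomial in that of $A$. (Alternatively, one may separate real and imaginary parts to reduce to a rank computation over $\QQ$, or reduce modulo sufficiently large primes and take the maximum.) Hence each rank evaluation costs time polynomial in the bit-length of $A$. Composing this with the polynomial number of calls from Theorem~\ref{thm:Algorithm} yields an algorithm computing $\dim_\RR \cA(X) = r'([n])$ in time polynomial in the bit-length of $A$. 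The only mildly technical ingredient is the bit-complexity control inside the rank subroutine; everything else is immediate from Theorems~\ref{thm:Main} and~\ref{thm:Algorithm}.
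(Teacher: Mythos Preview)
Your proof is correct and follows essentially the same approach as the paper's own proof: combine Theorems~\ref{thm:Main} and~\ref{thm:Algorithm} with the fact that the rank of any column submatrix $A[S]$ over $\QQ(i)$ can be computed in time polynomial in the bit-length of $A$. You supply more detail on the bit-complexity of the rank subroutine than the paper does, but the structure is identical.
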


\begin{proof}
This is a direct consequence of Theorems~\ref{thm:Main}
and~\ref{thm:Algorithm}, plus the fact that the rank of the submatrix
$A[S]$, obtained from $A$ by picking the columns labelled by any subset
$S \subseteq [n]$, can be computed in polynomial time in the bit-length
of $A$.
\end{proof}

\subsection{A matroid conjecture} \label{subsection:DRYformula}

From \cite{Draisma18d}, we know that, for any irreducible subvariety $X$
of $(\CC^*)^n$, $\dim_\RR \cA(X)$ depends only on the {\em
tropicalisation}
or {\em Bergman fan} $\Trop(X) := \lim_{t \to 0} t \cdot \cA(X)$ of $X$
\cite{Ber-LogarithmicLimitSet}. In our current setting,
where $X$ is the intersection with $(\CC^*)^n$
of a linear space $V \subseteq \CC^n$, $\Trop(X)$ is 
the support $\Sigma(M_V)$ of the \emph{matroid fan} 
of $M_V$ \cite{Stu-SolvingSystemsPolynomial}, 
so it was already known that the amoeba dimension of $X$
depends only on $M_V$. 
However, this dependence via \cite{Draisma18d} is
rather implicit and not quite sufficient for algorithmic computation:
it states that $\dim_\RR \cA(X) = \adim(M_V)$ with  
\begin{equation} \label{eq:DRYformula} 
  \adim(M) := \min\{2\dim_\RR(\Sigma(M)+R)-\dim_\RR(R) \mid R
\subseteq \RR^E \text{ rational subspace}\}.
\end{equation}
Here a {\em rational subspace} of $\RR^E$ is a real subspace spanned by
its intersection with $\QQ^E$, and $\Sigma(M)+R$ is the Minkowski sum of the
$r(E)$-dimensional polyhedral fan $\Sigma(M)$ and the linear
space $R$.
The formula in Theorem~\ref{thm:Main} was inspired by the
following conjecture.

\begin{conjecture} \label{conj:BraidSubspaces}
Let $M$ be a loopless matroid on $E$ and let $\Sigma(M)
\subseteq \RR^E$ be the matroid fan of $M$. 
Then the minimum in \eqref{eq:DRYformula} is attained 
by some space $R$ in the braid arrangement, that is, an 
intersection of a number of hyperplanes of the form $x_i =
x_j$ with distinct $i,j \in E$.
\end{conjecture}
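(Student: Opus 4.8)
The plan is to pin down the minimum in \eqref{eq:DRYformula} between two matching bounds, the upper one realised by a braid subspace. To a partition $\mathcal{P}=\{P_1,\dots,P_k\}$ of $E$ associate the braid subspace $R_{\mathcal{P}}:=\{x\in\RR^E : x\text{ is constant on each }P_i\}$, which is rational, has dimension $k$, and lies in the braid arrangement. The first step is to show
\[
\dim_\RR\bigl(\Sigma(M)+R_{\mathcal{P}}\bigr)=\sum_{i=1}^k r(P_i),
\]
so that the objective of \eqref{eq:DRYformula} at $R_{\mathcal{P}}$ equals $2\sum_i r(P_i)-k=\tilde r(P_1,\dots,P_k)$. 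For ``$\le$'', let $N:=\bigoplus_{i=1}^k M|_{P_i}$; its matroid fan $\prod_i\Sigma(M|_{P_i})$ has lineality space exactly $R_{\mathcal{P}}$, so $\Sigma(N)+R_{\mathcal{P}}=\Sigma(N)$ has dimension $r_N(E)=\sum_i r(P_i)$, while $\Sigma(M)\subseteq\Sigma(N)$ because every circuit of $N$ is a circuit of $M$ and a vector lies in a matroid fan iff every circuit of the matroid attains its minimum at least twice. For ``$\ge$'', one exhibits a cone $\sigma$ of $\Sigma(M)$ with $\dim\bigl(\operatorname{lin}(\sigma)+R_{\mathcal{P}}\bigr)=\sum_i r(P_i)$, say the cone of a complete flag of flats of $M$ adapted to $\mathcal{P}$; when $M$ is representable by $V\subseteq\CC^n$ this is transparent, since there $\dim(\Sigma(M)+R_{\mathcal{P}})=\dim_\CC V-\dim_\CC(V\cap W)+k$ for $W$ the span of sufficiently general vectors supported on the parts of $\mathcal{P}$, and one computes $\dim_\CC(V+W)=\sum_i r(P_i)$. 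In particular $\adim(M)\le\min_{\mathcal{P}}\tilde r(P_1,\dots,P_k)=r'(E)$, refining the bounds in Example~\ref{ex:trivial}.

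It then remains to prove the reverse inequality $\adim(M)\ge r'(E)$; together with the previous step this forces the minimum in \eqref{eq:DRYformula} to be attained at some $R_{\mathcal{P}}$, i.e., in the braid arrangement, which is exactly the conjecture. When $M$ is representable over $\CC$ this is immediate: $\adim(M_V)=\dim_\RR\cA(X)$ by \cite{Draisma18d} and $\dim_\RR\cA(X)=r'(E)$ by Theorem~\ref{thm:Main}, so $\adim(M_V)=r'(E)$ and the braid value above is already optimal. For a general loopless matroid I would argue combinatorially. Replacing a rational subspace $R$ by $R+\RR\mathbf{1}$ never increases the objective, since $\mathbf{1}$ lies in the lineality of $\Sigma(M)$, so we may assume $\mathbf{1}\in R$; one may also reduce to connected $M$, as both $\adim$ and $r'$ are additive over connected components. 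Given such an $R$, the idea is to choose a cone $\sigma$ of $\Sigma(M)$ attaining the maximum in $\dim(\Sigma(M)+R)=\max_{\sigma}\dim(\operatorname{lin}(\sigma)+R)$, and to extract from the interaction of $\sigma$ and $R$ a partition $\mathcal{P}$ of $E$ for which $R_{\mathcal{P}}$ performs at least as well, using the flag structure of $\sigma$ to control the comparison.

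The main obstacle is precisely this last combinatorial step for non-representable matroids: there is no amoeba to fall back on, and $\dim(\Sigma(M)+R)$ for an arbitrary rational $R$ is governed by $\max_{\sigma}\dim(\operatorname{lin}(\sigma)+R)$ over \emph{all} cones $\sigma$ of $\Sigma(M)$, whereas a braid subspace $R_{\mathcal{P}}$ meets every cone in the same prescribed way. Making the ``partition extracted from $R$'' construction uniform over the cones — in particular, showing that the partition read off from one optimal cone still beats $R$ against the maximum over all cones — is where the real work lies. A cleaner route, which would subsume the conjecture, is to show that the lower-bound half of the proof of Theorem~\ref{thm:Main} is in fact purely combinatorial and bounds $\dim(\Sigma(M)+R)$ from below by $\tilde r$ of some partition of $E$ for every rational $R$ and every loopless $M$; then $\adim(M)=r'(E)$ for all matroids, and the conjecture follows at once.
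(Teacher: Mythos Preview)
The paper does not actually prove Conjecture~\ref{conj:BraidSubspaces} in general. Immediately after stating it, the paper records that $\tilde r(P_1,\dots,P_k)=2\dim_\RR(\Sigma(M)+R)-\dim_\RR R$ for the braid subspace $R$ associated to a partition, so that $r'(E)\ge\adim(M)$ and the conjecture is equivalent to $r'(E)=\adim(M)$; it then observes that for $\CC$-representable $M$ this equality follows from Theorem~\ref{thm:Main} together with $\dim_\RR\cA(X)=\adim(M_V)$ from \cite{Draisma18d}, and closes with: ``It would be very interesting to have a direct proof of Conjecture~\ref{conj:BraidSubspaces} that works also for non-representable matroids.'' The non-representable case is left open.

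Your proposal matches this exactly. Your computation of the objective at a braid subspace $R_{\mathcal P}$ is the paper's ``easy to check'' step; your deduction of the representable case from Theorem~\ref{thm:Main} and \cite{Draisma18d} is the paper's argument verbatim; and your final two paragraphs correctly isolate the non-representable case as the genuine obstacle, which the paper does not overcome either. So there is no gap in your proposal relative to what the paper establishes---you have reproduced its partial result and accurately located the open problem. (Your sketch of the inequality $\dim(\Sigma(M)+R_{\mathcal P})\ge\sum_i r(P_i)$ via a flag ``adapted to $\mathcal P$'' is a bit loose, but the paper treats this identity as routine and does not spell it out either.)
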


To make the connection with Theorem~\ref{thm:Main}, consider the subspace
$R$ for which $x_i = x_j$ whenever $i$ and $j$ lie in the same part of
the partition $\{P_1,\ldots, P_k\}$.  It is then easy to check that
$\tilde{r}(P_1,\ldots, P_k) = 2\dim_\RR(\Sigma(M)+R)-\dim_\RR(R)$.
Hence clearly $r'(E) \geq \adim(M)$, and 
Conjecture~\ref{conj:BraidSubspaces} is in fact equivalent to the equality
$r'(E) = \adim(M)$.  

In particular, if $M$ is the loopless matroid of a linear space $V
\subseteq \CC^n$, then Conjecture~\ref{conj:BraidSubspaces} follows
from Theorem~\ref{thm:Main} and the formula for the amoeba dimension
from \cite{Draisma18d}. It would be very interesting to have a direct
proof of Conjecture~\ref{conj:BraidSubspaces} that works also for non-representable matroids.

In any case, an important benefit of Theorem~\ref{thm:Main} over the
formula $\dim_\RR \cA(X) = \adim(M_V)$ is that Theorem~\ref{thm:Main}
allows for efficient computation of $\dim_\RR \cA(X)$.

\subsection{Amoeba dimensions and algebraic matroids}

Let $K$ be an infinite field. 
Any irreducible variety $X \subseteq (K^*)^n$ defines an {\em algebraic}
matroid $M_X$ on $[n]$ by declaring $S \subseteq [n]$ to be independent
if the projection from $X$ to $(\CC^*)^S$ is dominant (this is equivalent to
the functions $\{x_i : i \in S\}$, regarded as polynomial functions
on $X$, being algebraically independent over $K$).
If $K = \R$ and $A \subset (\R^*)^n$ is an irreducible semi-algebraic set,
we define the matroid of $A$ as the matroid of its Zariski closure $\overline{A} 
\subset (\R^*)^n$, that is, $M_A := M_{\overline{A}}$.
Note that, for example by the decomposition theorem for semi-algebraic sets, 
$\overline{A} =  (\R^*)^n$ if and only if $A$ contains a nonempty 
Euclidean-open subset. It follows easily that $S \subseteq [n]$ is independent 
in $M_A$ if and only if the projection of $A$
to $\RR^S$ contains a nonempty Euclidean-open subset of $\RR^S$.

\begin{prop} \label{prop:AmoebaMatroid}
Let $X \subset (\C^*)^n$ be an irreducible variety. 
The amoeba $\cA(X)$ defines a matroid $M_{\cA(X)}$, by declaring
$S \subseteq [n]$ to be independent if the projection of $\cA(X)$
to $\RR^S$ contains a nonempty Euclidean-open subset of $\RR^S$.
\end{prop}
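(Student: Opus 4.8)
The plan is to transport the ``independence'' condition attached to $\cA(X)$ to a genuine \emph{irreducible semi-algebraic} subset of $(\R^*)^n$, so that the characterisation of the matroid of such a set recalled just above applies verbatim. Note that one cannot simply take $M_{\cA(X)}=M_X$: already for the affine line $\{z_1+z_2=1\}\subseteq(\C^*)^2$ the amoeba is two-dimensional, so $\{1,2\}$ is ``independent'' for $\cA(X)$ while being dependent in the algebraic matroid $M_X$; one really needs a model that sees the amoeba and not just $X$.

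First I would construct such a model. Regard $(\C^*)^n$ as the real algebraic set $(\R^2\setminus\{0\})^n\subseteq\R^{2n}$, let $X_\R\subseteq\R^{2n}$ be $X$ viewed as a real algebraic set, and let $\nu\colon\R^{2n}\to\R^n$ be the polynomial map $\nu(z)_k=|z_k|^2$. Set $B:=\nu(X_\R)$; by Tarski--Seidenberg this is a semi-algebraic subset of $(\R_{>0})^n\subseteq(\R^*)^n$, and a direct computation gives $B=\psi(\cA(X))$ for the coordinatewise diffeomorphism $\psi\colon\R^n\to(\R_{>0})^n$, $\psi(a)_k=e^{2a_k}$. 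For each $S\subseteq[n]$ the projection of $B$ to $\R^S$ is $\psi_S$ applied to the projection of $\cA(X)$ to $\R^S$, where $\psi_S\colon\R^S\to(\R_{>0})^S$, $\psi_S(b)_k=e^{2b_k}$; since $\psi_S$ is a homeomorphism onto the open subset $(\R_{>0})^S$ of $\R^S$, these two projections contain a nonempty Euclidean-open subset of $\R^S$ simultaneously. Hence the family $\mathcal I$ of sets declared independent by the proposition coincides with the analogous family attached to $B$.

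Second I would show that $B$ is irreducible as a semi-algebraic set, i.e.\ that its Zariski closure is irreducible. This relies on the standard fact that the real algebraic set $X_\R$ underlying the irreducible complex variety $X$ is irreducible -- for instance because $\R[X_\R]\otimes_\R\C\cong\C[X]\otimes_\C\overline{\C[X]}$ is an integral domain, whence so is $\R[X_\R]$. As $\nu$ is a morphism of real varieties it is continuous for the Zariski topology, so $B=\nu(X_\R)$ is a continuous image of an irreducible space and $\overline B$ is irreducible. Now $B$ is an irreducible semi-algebraic subset of $(\R^*)^n$, so it carries a matroid $M_B:=M_{\overline B}$, and by the discussion preceding the proposition a set $S$ is independent in $M_B$ exactly when the projection of $B$ to $\R^S$ contains a nonempty Euclidean-open subset of $\R^S$. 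Together with the first step this identifies $\mathcal I$ with the independent-set family of $M_B$, so $\cA(X)$ does define a matroid and we may set $M_{\cA(X)}:=M_B$. The only genuinely non-formal ingredient is the irreducibility of $X_\R$ (equivalently, of $\overline B$); everything else is transport through $\psi$ together with the already-proved description of the matroid of an irreducible semi-algebraic set.
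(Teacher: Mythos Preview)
Your proof is correct and follows essentially the same route as the paper: your set $B$ is exactly the paper's ``squared algebraic amoeba'' $\cA'(X)$, and your diffeomorphism $\psi$ is the inverse of the paper's ``square root and logarithm'' map. The only difference is cosmetic---you spell out the real irreducibility of $X$ via the tensor product isomorphism $\R[X_\R]\otimes_\R\C\cong\C[X]\otimes_\C\overline{\C[X]}$, whereas the paper simply invokes that $\C/\R$ is finite separable.
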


\begin{proof}
We consider the ``squared algebraic amoeba'' $\cA'(X) \subset (\R_{>0})^n$
defined as the image of $X$ under the map $|.|^2 \colon (\C^*)^n \to (\R_{>0})^n$, 
$(z_i)_i \mapsto (|z_i|^2)_i$. 
Since $X$ is irreducible and $\CC/\RR$ is a finite separable extension, $X$
is also irreducible with respect to the real Zariski topology of 
$(\C^*)^n$. Since $|.|^2$ is a real regular map, it
follows that $\cA'(X)$ is an irreducible semi-algebraic set with associated
(algebraic) matroid $M_{\cA'(X)}$.
Taking square roots and logarithm provides a diffeomorphism 
$(\R_{>0})^n \to \R^n$ sending $\cA'(X)$ to $\cA(X)$, and this 
diffeomorphism is compatible with projections in the obvious sense. 
Hence $M_{\cA(X)}$ as defined in the statement is equal to $M_{\cA'(X)}$,
which proves the claim. 
\end{proof}

\begin{prop}
If $X=V \cap(\CC^*)^n$ where $V \subseteq \CC^n$ is a linear space
not contained in any coordinate hyperplane, then the matroid $M_{\cA(X)}$
is the matroid from Theorem~\ref{matroid} with the rank function $r'$
constructed from $M:=M_X=M_V$. In particular, $r'$ defines an algebraic matroid. 
\end{prop}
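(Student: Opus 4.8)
The plan is to show that for every $S\subseteq[n]$ the rank of $S$ in $M_{\cA(X)}$ equals $r'(S)$, where $r'$ is the function defined in \eqref{rprime} from $M=M_V$; since two matroids on $[n]$ with the same rank function coincide, this gives $M_{\cA(X)}=M'$. By Proposition~\ref{prop:AmoebaMatroid} (and its proof), $M_{\cA(X)}$ equals the algebraic matroid of the irreducible semi-algebraic set $\cA'(X)$, and for an algebraic matroid the rank of $S$ is the dimension of the coordinate projection of the underlying set to $\RR^S$. So the entire statement, including the final claim that $r'$ is the rank function of an algebraic matroid, reduces to proving $\dim_\RR\pi_S(\cA(X))=r'(S)$, where $\pi_S\colon\RR^n\to\RR^S$ is the projection.

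First I would use that $\Log$ intertwines coordinate projections, whence $\pi_S(\cA(X))=\cA(\pi_S(X))$, now denoting also by $\pi_S$ the complex projection $\CC^n\to\CC^S$. The set $\pi_S(X)$ is complex constructible, hence real semi-algebraic, and contained in $(\CC^*)^S$. Since $M_V$ is loopless, $V$ lies in no coordinate hyperplane, so $X$ is Zariski-dense in $V$; therefore $\pi_S(X)$ is Zariski-dense, and hence Euclidean-dense, in the irreducible linear space $V_S:=\pi_S(V)$. In particular $V_S$ lies in no coordinate hyperplane of $\CC^S$, so $X_S:=V_S\cap(\CC^*)^S$ is nonempty and $\pi_S(X)$ is Euclidean-dense in the Euclidean-open subset $X_S$ of $V_S$.

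Next I would turn this density into an equality of dimensions. From $\pi_S(X)\subseteq X_S$ and continuity of $\Log$ we get $\cA(\pi_S(X))\subseteq\cA(X_S)\subseteq\overline{\cA(\pi_S(X))}$ (Euclidean closure); since both $\cA(\pi_S(X))$ and $\cA(X_S)$ are, via the squared-amoeba construction of Proposition~\ref{prop:AmoebaMatroid}, diffeomorphic to semi-algebraic sets, and the dimension of a semi-algebraic set equals that of its closure, this forces $\dim_\RR\cA(\pi_S(X))=\dim_\RR\cA(X_S)$. Finally I would apply Theorem~\ref{thm:Main} to the linear space $V_S\subseteq\CC^S$, whose matroid is loopless: as $r_{V_S}(P)=r_V(P)$ for every $P\subseteq S$ (both equal $\dim_\CC\pi_P(V)$), the right-hand side of \eqref{eq:Main} for $V_S$ is exactly the minimum of $\sum_i(2r_V(P_i)-1)$ over partitions of $S$, i.e.\ $r'(S)$ by \eqref{rprime}. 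Chaining the equalities finishes the argument.

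The step I expect to be the main obstacle is precisely the replacement of $\pi_S(X)$ by the clean torus intersection $X_S=V_S\cap(\CC^*)^S$: Theorem~\ref{thm:Main} is stated only for intersections of a linear space with the torus, whereas $\pi_S(X)$ is merely a dense constructible subset of $X_S$ — projecting $X$ can delete a lower-dimensional locus, and deciding when it equals all of $X_S$ would require analysing matroid-closure conditions I would rather avoid. The density-plus-closure argument circumvents this, but it really does need $\cA(\pi_S(X))$ to be available as a semi-algebraic set, which is exactly why I reuse the squared-amoeba trick from the proof of Proposition~\ref{prop:AmoebaMatroid}. The remaining ingredients — that $\Log$ commutes with coordinate projections, that restricting a representable matroid to $S$ corresponds to projecting the linear space to $\CC^S$, and that the rank function of an algebraic matroid records the dimension of the relevant coordinate projection — are routine.
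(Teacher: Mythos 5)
Your argument is correct and follows essentially the same route as the paper's one-paragraph proof: project to $\CC^S$, apply Theorem~\ref{thm:Main}, and match the right-hand side of \eqref{eq:Main} with $r'(S)$. The one place you add genuine content is in handling the fact that $\pi_S(X)$ is a priori only a dense constructible subset of $V_S\cap(\CC^*)^S$ rather than the full torus intersection to which Theorem~\ref{thm:Main} literally applies; the paper silently identifies the two when it writes ``Theorem~\ref{thm:Main} applied to $X_S$,'' whereas your density-plus-semi-algebraic-closure argument (reusing the squared-amoeba device from Proposition~\ref{prop:AmoebaMatroid}) makes the equality of amoeba dimensions explicit. Working with rank functions rather than independence as the paper does is an immaterial reformulation.
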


\begin{proof}
Let $S$ be a subset of $[n]$ and let $X_S$ be the projection of $X$
into $(\CC^*)^S$. Then $S$ is independent in $M_{\cA(X)}$ if and only if
$\dim \cA(X_S)=|S|$, which by Theorem~\ref{thm:Main} applied to $X_S$
is equivalent to the condition that $r'(S)=|S|$.
\end{proof}

We note that the proof above only uses Theorem~\ref{thm:Main} and
not Theorem~\ref{matroid}. Hence when $M=M_V$ is a matroid representable
over $\CC$, 
Theorem~\ref{thm:Main} and the two propositions above {\em imply}
Theorem~\ref{matroid}. 

Given that, for $X$ arising from a linear space, $M_{\cA(X)}$ has such
a nice description in terms of $M_X$, one might wonder whether the same
holds for general irreducible varieties $X$. In particular, one could ask
whether the amoeba dimension of $X$ is also determined by $M_X$, for
instance via the formula from Theorem~\ref{thm:Main}. 

The answer, however, is no in general. For an extreme counterexample,
let $X$ be a $d$-dimensional subtorus of $(\CC^*)^n$ such that $M_X$
is the uniform matroid $U_{d,n}$ of rank $d$ on $[n]$---this is achieved
by choosing the subtorus $X$ whose Lie algebra, i.e., tangent space at
$(1,\ldots,1)$, is the $\CC$-span $\langle R \rangle_\CC$ of any rational
subspace $R$ of $\QQ^n$ that represents the matroid $U_{d,n}$. Since
$X$ is a subtorus, its amoeba is a linear space, namely, the real
span $\langle R \rangle_\RR$. Hence $\dim_\RR \cA(X)$ is $d$, which is
the minimum among all amoeba dimensions of $d$-dimensional varieties
(actually, this minimum is attained only for translates of subtori;
see \cite[Theorem 4.6]{Nisse22}).  On the other hand, the tangent
space $V$ to $X$ at any point $p \in X$ equals $p \cdot \langle R
\rangle_\CC$ (the Hadamard product) and hence also represents the matroid
$U_{d,n}$. Therefore $\dim_\RR \cA(V \cap (\CC^*)^n)=\min\{n,2d-1\}$, as is
easily seen with the formula in Theorem~\ref{thm:Main}.

\subsection{Organisation of this paper}

In Section~\ref{sec:Proof1}, we prove Theorem~\ref{thm:Main}, in
Section~\ref{sec:AmoebaMatroid} we prove Theorem~\ref{matroid},
and in Section~\ref{sec:Algorithm} we derive the algorithm in
Theorem~\ref{thm:Algorithm}.

\subsection*{Acknowledgments}

This paper grew out of several sources: JR's talk on amoebas \cite{Rau20} where the formula of Theorem~\ref{thm:Main} was first conjectured, SE's Master's thesis \cite{Eggleston22} at the University of Bern under the supervision of JD, work by CHY on a combinatorial analysis of the Jacobian of $\Log$ at a general point of a linear space, and RP's work on the matroid $M'$ of Theorems~\ref{matroid} and~\ref{thm:Algorithm}.
CHY thanks Kris Shaw for suggesting this problem to him.
All authors thank Frank Sottile for discussions on an early version of this work.

\section{Proof of Theorem~\ref{thm:Main}}
\label{sec:Proof1}

\subsection{Proof of the inequality $\leq$ in
\eqref{eq:Main}}
\label{ssec:ineq1}

As mentioned in \S\ref{subsection:DRYformula}, 
the inequality $\leq$ in
\eqref{eq:Main} is a consequence of 
\cite{Draisma18d} via 
$\dim_\RR \cA(X) = \adim(M_V) \leq r'(E)$.
Nevertheless, we include a short proof which also 
serves as preparation for later arguments. 

Recall from \cite{Draisma18d} that if a closed subvariety $X$ of $(\CC^*)^n$
is stable under a subtorus $T$ of $(\CC^*)^n$, and if we set $Y:=X/T
\subseteq (\CC^*)^n/T \cong (\CC^*)^m$ with $m=n-\dim_\CC(T)$, then we have
a surjective map $\cA(X) \to \cA(Y)$ whose fibres are translates of
$\cA(T)$. It then follows that
\[ \dim_\RR \cA(X)=\dim_\RR \cA(Y) + \dim_\RR
\cA(T)=\dim_\RR \cA(Y) + \dim_\CC T. \]

\begin{lm} \label{lm:2dmin1}
Let $X=V \cap (\CC^*)^n$ where $V$ is a $d$-dimensional complex
subspace of $\CC^n$ not contained in any coordinate
hyperplane. Then $\dim_\RR \cA(X) \leq 2d-1$. 
\end{lm}

\begin{proof}
Since $V$ is closed under scalar multiplication, $X$ is stable under
the one-dimensional torus $T=\{(t,\ldots,t) \mid t \in \CC^*\}$.
Then $Y:=X/T$ has dimension $d-1$, and hence $\dim_\RR
\cA(Y) \leq \dim_\CC Y = 2(d-1)$. It therefore follows from the above that 
\[ \dim_\RR \cA(X)=\dim_\RR \cA(Y) + \dim_\CC T \leq
2(d-1)+1=2d-1, \]
as desired.
\end{proof}

\begin{prop}
In Theorem~\ref{thm:Main}, the inequality $\leq$ holds. 
\end{prop}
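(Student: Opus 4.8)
The plan is to establish, for each individual partition $\{P_1,\ldots,P_k\}$ of $[n]$, the bound $\dim_\RR \cA(X) \le \sum_{i=1}^k(2r_V(P_i)-1)$, and then to take the minimum over all partitions. So fix a partition. For each $i$ let $\pi_i\colon\CC^n\to\CC^{P_i}$ denote the coordinate projection and put $V_i:=\pi_i(V)\subseteq\CC^{P_i}$. By the definition of $M_V$ we have $\dim_\CC V_i=r_V(P_i)$. Moreover, since $X\neq\emptyset$, i.e.\ $M_V$ has no loops, the projection of $V$ to each single coordinate $j\in[n]$ is surjective; hence so is the projection of $V_i$ to any coordinate $j\in P_i$, and therefore $V_i$ is not contained in any coordinate hyperplane of $\CC^{P_i}$. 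Applying Lemma~\ref{lm:2dmin1} to $V_i\subseteq\CC^{P_i}$ then gives $\dim_\RR\cA(V_i\cap(\CC^*)^{P_i})\le 2r_V(P_i)-1$.

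Next I would identify $\RR^n=\bigoplus_{i=1}^k\RR^{P_i}$ compatibly with $\Log$ and the $\pi_i$. For every $z\in X=V\cap(\CC^*)^n$ one has $\pi_i(z)\in V_i\cap(\CC^*)^{P_i}$ for all $i$ and $\Log(z)=(\Log(\pi_1(z)),\ldots,\Log(\pi_k(z)))$, so that
\[ \cA(X)\ \subseteq\ \prod_{i=1}^k\cA\bigl(V_i\cap(\CC^*)^{P_i}\bigr). \]
Using that the real dimension of an amoeba is monotone under inclusion and additive under products---which one reduces to the corresponding facts for semialgebraic sets by passing to the squared amoeba $\cA'$, exactly as in the proof of Proposition~\ref{prop:AmoebaMatroid}---I would conclude
\[ \dim_\RR\cA(X)\ \le\ \sum_{i=1}^k\dim_\RR\cA\bigl(V_i\cap(\CC^*)^{P_i}\bigr)\ \le\ \sum_{i=1}^k\bigl(2r_V(P_i)-1\bigr), \]
and taking the minimum over all partitions of $[n]$ gives the inequality $\le$ in \eqref{eq:Main}.

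This argument is essentially bookkeeping, and I do not expect it to pose a real obstacle: the only steps that deserve a sentence of justification are that each $V_i$ avoids the coordinate hyperplanes of $\CC^{P_i}$ (which is precisely looplessness, and is what licenses invoking Lemma~\ref{lm:2dmin1} on each part) and the elementary behaviour of amoeba dimension under inclusions and products. The genuine difficulty of Theorem~\ref{thm:Main} lies entirely in the reverse inequality $\ge$.
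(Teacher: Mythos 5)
Your proof is correct and follows essentially the same route as the paper: project $V$ onto each part to get $V_i$, observe $\cA(X)\subseteq\prod_i\cA(X_i)$, and apply Lemma~\ref{lm:2dmin1} to each factor. The paper leaves implicit the check that each $V_i$ avoids the coordinate hyperplanes of $\CC^{P_i}$ (so Lemma~\ref{lm:2dmin1} applies), which you spell out; otherwise the arguments coincide.
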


\begin{proof}
Let $\{P_1,\ldots,P_k\}$ be a partition of $[n]$.
Let $V_i$ be the image of $V$ under the projection $\CC^n \to
\CC^{P_i}$, set $d_i:=\dim_\CC V_i$, and let $X_i$ be the intersection
of $V_i$ with $(\CC^*)^{P_i}$. Since $V$ is contained in $\prod_i V_i$,
$X$ is contained in $X':=\prod_i X_i$, and hence also
$\cA(X) \subseteq \cA(X')$. Then we find
\[ \dim_\RR \cA(X) \leq \dim_\RR \cA(X') = \sum_i \dim_\RR
\cA(X_i) \leq \sum_i (2d_i-1) \]
where the equality follows from the fact that the amoeba of a product
is the product of the amoebas, and where the last inequality follows
from Lemma~\ref{lm:2dmin1} applied to each $V_i$.
\end{proof}

\subsection{Proof of the inequality $\geq$ in
\eqref{eq:Main}}

To prove $\geq$ in Theorem~\ref{thm:Main}, we need to construct a
partition for which equality holds. We will do so in an inductive manner.

By definition, $m:=\dim_\RR \cA(X)$ equals the maximum, over all $p
\in X$, of the real rank of the real linear map $d_p \Log: T_p X=V \to
\RR^n$, and this maximum is attained in an open dense subset of $X$
(in the Euclidean topology).

In what follows, for vectors $v,w \in \CC^n$, we write $v \cdot w$ for
their Hadamard product $(v_1w_1,\ldots,v_nw_n)$; and if $w \in (\CC^*)^n$,
then we write $v/w$ for the Hadamard quotient
$(v_1/w_1,\ldots,v_n/w_n)$. We also write $\one \in \CC^n$
for the all-one vector $(1,\ldots,1)$. Furthermore, if $z$
is a complex number or vector of complex numbers, then we
write $\Re(z)$ and $\Im(z)$ for the real and imaginary
parts of parts of $z$, respectively; we also use this
notation for a subset of vectors in $\CC^n$ (e.g., a subspace).

\begin{lm} \label{lm:dpLog}
For $p \in X$ and $v \in T_p X=V$ we have 
\[  (d_p \Log)(v)=\Re(v/p)=(\Re(v_1/p_1),\ldots,\Re(v_n/p_n)). \]
\end{lm}

\begin{proof}
Define $w:=v/p$ and decompose $w=x + iy$ with $x,y \in \RR^n$. 
For $\epsilon \in \RR$ tending to $0$ we have
\[ \Log(p+\epsilon v)=\Log(p(\one+\epsilon
w))=\Log(p)+\Log(\one+\epsilon (x+iy))= \Log(p)+\epsilon x +
O(\epsilon^2). \]
The last equality holds because in each component,
\begin{align*}
    \log|1 + \epsilon(x_j + iy_j)| &= \log\left( \sqrt{1 + 2\epsilon x_j + \epsilon^2(x_j^2 + y_j^2)}\right) = \frac{1}{2}\log\left( 1 + 2\epsilon x_j + \epsilon^2(x_j^2 + y_j^2)\right) \\
    &= \frac{1}{2}(2\epsilon x_j + \epsilon^2(x_j^2 + y_j^2) - \frac{1}{2}\left(2\epsilon x_j + \epsilon^2(x_j^2 + y_j^2)\right)^2 + \dots)=\epsilon x_j + O(\epsilon^2).
\end{align*}
This implies $(d_p \Log)(v)= x=\Re(v/p)$, as desired.
\end{proof}

Pick $p_0 \in X$ such that the linear map $A:=d_{p_0} \Log:V
\to \RR^n$ has the maximal possible rank $m$. This means that $A$ is a point in the real manifold $\Hom_\RR(V,\RR^n)_m$ of real linear maps $V \to \RR^n$ of rank precisely $m$. We will use the following description of the tangent space $T_A
\Hom_\RR(V,\RR^n)_m$ (see e.g.~\cite[Example 14.16]{Harris1992}):
\begin{equation} \label{eq:Tangent} T_A \Hom_\RR(V,\RR^n)_m=\{B \in \Hom_\RR(V,\RR^n) \mid B\ker(A)
\subseteq \im(A) \}. \end{equation}
Now, for a Euclidean-open neighbourhood $U$ of $p_0$ in $X$,
the map 
\[ \Phi:U \to \Hom_\RR(V,\RR^n)_m,\quad p \mapsto d_p \Log \]
is a map of smooth manifolds. 

\begin{lm} \label{lm:dpphi}
The derivative $d_{p_0} \Phi$ is the map 
\[ T_{p_0} U=V \to T_{A} \Hom_\RR(V,\RR^n)_m, \quad u \mapsto (v \mapsto - \Re((v
\cdot u)/(p_0 \cdot p_0))) \]
where $A=d_{p_0} \Log \in \Hom_\RR(V,\RR^n)$.
\end{lm}

\begin{proof}
For $u \in V$ define $w:=u/p_0$. 
Then, for $\epsilon$ real and tending to zero, and for $v
\in V$, by Lemma~\ref{lm:dpLog} we have 
\begin{align*} 
\Phi(p_0+\epsilon u)(v)&=
(d_{p_0+\epsilon u} \Log)(v)=\Re(v/(p_0+\epsilon u))
=\Re(v/(p_0(\one+\epsilon w)))\\
&=\Re((v/p_0)\cdot(\one-\epsilon w)) + O(\epsilon^2) 
=\Re(v/p_0) - \epsilon \Re((v\cdot w)/p_0) + O(\epsilon^2). \end{align*}
The coefficient of $\epsilon$ is the expression in the
lemma. 
\end{proof}

We simplify the situation as follows: we replace $V$ by $V/p_0$, $X$ by
$X/p_0$, and $p_0$ by $\one$. This only translates the amoeba of $X$,
and it has no effect on the matroid $M_V$.  Consequently, both sides in
\eqref{eq:Main} are unaltered.
In this simplified setting, Lemma~\ref{lm:dpLog} says that
\[ Av=(d_\one \Log)(v)=\Re(v). \]
In particular, 
\begin{align}  \label{eq:dim}
m&=\dim_\RR \cA(X)=\dim_\RR V - \dim_\RR \ker(A)=\dim_\RR
V - \dim_\RR (V \cap (i\RR^n))\\ &= \dim_\RR V - \dim_\RR (V
\cap \RR^n). \notag
\end{align}
Furthermore, Lemma~\ref{lm:dpphi} says that $d_\one \Phi$ is the linear
map that sends $u$ to the linear map $B_u(v):=-\Re(v \cdot u)$. We now
come to the crucial point in the proof.

\begin{prop} \label{prop:Hadamard}
Under the standing assumption that $p=\one \in V$ is a point where
$d_p\Log$ has the maximal rank $m=\dim_\RR \cA(X)$, the real vector space $V+\RR^n
\subseteq \CC^n$ is closed under Hadamard multiplication with the real
vector space $V \cap \RR^n$.
\end{prop}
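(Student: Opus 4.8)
The plan is to exploit the maximality of the rank $m$ at the base point $\one$. The key observation is that, because $m = \dim_\RR \cA(X)$ is the \emph{maximal} value of $\rank(d_p\Log)$ and matrix rank is lower semicontinuous in the entries, we have $\rank(d_p\Log) = m$ for all $p$ in some Euclidean neighbourhood of $\one$; this is precisely what makes $\Phi$ a well-defined smooth map into the rank-$m$ stratum $\Hom_\RR(V,\RR^n)_m$. Consequently $d_\one\Phi$ maps $T_\one U = V$ into the tangent space $T_A\Hom_\RR(V,\RR^n)_m$, which by \eqref{eq:Tangent} consists of the linear maps $B$ with $B\ker(A) \subseteq \im(A)$. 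Plugging in the explicit formula $d_\one\Phi(u) = B_u$ with $B_u(v) = -\Re(v\cdot u)$ from Lemma~\ref{lm:dpphi}, we conclude that $B_u(\ker A) \subseteq \im(A)$ for every $u \in V$.

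The next step is to rewrite this containment concretely. In the simplified setting $\im(A) = \Re(V)$ and $\ker(A) = V \cap (i\RR^n)$, and since $V$ is a complex subspace one has $V \cap (i\RR^n) = i\,(V\cap\RR^n)$; thus every element of $\ker(A)$ has the form $is$ with $s \in V\cap\RR^n$. Using $\Re(iz) = -\Im(z)$ and the fact that $s$ is real, $B_u(is) = -\Re(i(s\cdot u)) = \Im(s\cdot u) = \Im(u)\cdot s$. So the tangent-space condition becomes the concrete statement
\[
\Im(u)\cdot s \in \Re(V) \qquad\text{for all } u \in V \text{ and all } s \in V\cap\RR^n .
\]

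Finally I would deduce the asserted Hadamard closure. Fix $s \in V\cap\RR^n$. Since a general element of $V+\RR^n$ is $v+t$ with $v\in V$ and $t\in\RR^n$, and $t\cdot s \in \RR^n$, it is enough to show that $v\cdot s \in V+\RR^n$ for every $v\in V$. As $s$ is real, $v\cdot s = (\Re v)\cdot s + i\,((\Im v)\cdot s)$, and the first summand lies in $\RR^n$. By the displayed condition, $(\Im v)\cdot s = \Re(v')$ for some $v'\in V$, and because $V$ is complex, $i\Re(v') = i v' + \Im(v') \in V + \RR^n$. Hence $v\cdot s \in \RR^n + (V+\RR^n) = V+\RR^n$, which completes the argument.

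The conceptual heart of the proof is the first paragraph: the realisation that maximality of the amoeba dimension forces the derivative of $p\mapsto d_p\Log$ to be tangent to the rank stratum, yielding the structural identity $B_u\ker(A)\subseteq\im(A)$. Everything afterwards is routine linear algebra with real and imaginary parts; the only points needing a little care are the identification $\ker(A) = i(V\cap\RR^n)$, the repeated use of complex-linearity $iV = V$, and the elementary identity $i\Re(v') = iv' + \Im(v')$ which converts membership in $\Re(V)$ into membership in $V+\RR^n$.
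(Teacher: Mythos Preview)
Your argument is correct and follows essentially the same route as the paper: you use that $d_\one\Phi$ lands in the tangent space of the rank-$m$ stratum to get $B_u(\ker A)\subseteq\im A$, identify $\ker A=i(V\cap\RR^n)$ and $\im A=\Re(V)$, and then unwind this into $(V\cap\RR^n)\cdot V\subseteq V+\RR^n$. The only cosmetic difference is in the final linear-algebra step: the paper observes directly that $\Im(v\cdot u)\in\Re(V)=\Im(V)$ forces $v\cdot u\in V+\RR^n$, whereas you reach the same conclusion via the identity $i\Re(v')=iv'+\Im(v')$; both are equivalent.
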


\begin{proof}
For each $u \in V$, the linear map $B_u:V \to \RR^n$ lies in
the image of $d_\one \Phi$ and hence in $T_A
\Hom_\RR(V,\RR^n)_m$. By \eqref{eq:Tangent}, this means that 
$B_u$ maps $\ker(A)$ into $\im(A)$.  Since $A$
maps a vector to its real part, we have $\ker(A)=V \cap
(i\RR^n)$ and $\im(A)=\Re(V)$. Hence we find that $B_u(v)=-\Re(v
\cdot u)$ is in $\Re(V)$ for all $v \in V \cap (i
\RR^n)$. Then, for $v \in V \cap \RR^n$, we have $i v \in (V
\cap (i\RR^n))$ and hence the real part of $-iv \cdot u$ is in
$\Re(V)$; but this is also the imaginary part of $v \cdot
u$. Furthermore, since $\Re(V)$ is also the set of
imaginary parts of vectors in $V$ (recall that $V$ is a complex vector 
space, so $V$ is invariant under the multiplication by $i$), we
find that for all $u \in V$ and all $v \in V \cap \RR^n$,
the imaginary part of $v \cdot u$ equals that of a vector in
$V$, so that $v \cdot u \in V+\RR^n$. Thus 
\[ (V \cap \RR^n) \cdot V \subseteq V+\RR^n, \]
and since clearly also 
\[ (V \cap \RR^n) \cdot \RR^n \subseteq \RR^n \subseteq
V+\RR^n \]
the proposition follows. 
\end{proof}

\begin{prop} \label{prop:Split}
Assume that $m=\dim_\RR \cA(X)< 2d-1$. Then there exists a partition
$\{P_1, P_2\}$ of $[n]$ into two parts with the following
property. Let $V_i$ be the projection of $V$ in $\CC^{P_i}$
and set $X_i:=V_i \cap (\CC^*)^{P_i}$. Then
\[ \dim_\RR \cA(X)=\dim_\RR \cA(X_1) + \dim_\RR \cA(X_2). \]
\end{prop}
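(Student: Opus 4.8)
The plan is to read the required two-part partition off a single vector of $V\cap\RR^n$ that is not a scalar multiple of $\one$, and then to use the Hadamard-closure property of Proposition~\ref{prop:Hadamard} to split $V+\RR^n$ as a product along this partition; a dimension count will upgrade the product bound from \S\ref{ssec:ineq1} to an equality. First I would translate the hypothesis: since $V$ is a complex space, $\dim_\RR V = 2d$, so by \eqref{eq:dim} the assumption $m<2d-1$ is exactly $\dim_\RR(V\cap\RR^n)\geq 2$. As $\one\in V\cap\RR^n$, we may pick $w\in V\cap\RR^n$ that is not a scalar multiple of $\one$; fixing one value $\lambda$ attained by $w$ and setting $P_1:=\{i : w_i=\lambda\}$ and $P_2:=[n]\setminus P_1$, we obtain a partition of $[n]$ into two parts, both nonempty because $w$ takes at least two distinct values.

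The heart of the argument is the splitting of $U:=V+\RR^n$. Since $w\in V\cap\RR^n$, Proposition~\ref{prop:Hadamard} gives $w\cdot U\subseteq U$, i.e.\ $U$ is invariant under the diagonalisable operator $D_w\colon z\mapsto w\cdot z$ on $\CC^n$, whose eigenspaces are the coordinate subspaces $\CC^{Q}$ indexed by the level sets $Q$ of $w$. An invariant subspace of a diagonalisable operator is the direct sum of its intersections with the eigenspaces, so, coarsening the level sets to $\{P_1,P_2\}$ (legitimate since $P_1$ is one level set and $P_2$ the union of the others), we get $U=(U\cap\CC^{P_1})\oplus(U\cap\CC^{P_2})$. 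Writing $V_i:=\pi_{P_i}(V)$ for the projection of $V$ into $\CC^{P_i}$, one checks $U\cap\CC^{P_i}=\pi_{P_i}(U)=V_i+\RR^{P_i}$, so that $V+\RR^n=(V_1+\RR^{P_1})\times(V_2+\RR^{P_2})$. Taking real dimensions, using $\dim(A+B)=\dim A+\dim B-\dim(A\cap B)$, and cancelling the $|P_i|$ against $n=|P_1|+|P_2|$ gives
\[ \dim_\RR V-\dim_\RR(V\cap\RR^n)\;=\;\sum_{i=1}^2\bigl(\dim_\RR V_i-\dim_\RR(V_i\cap\RR^{P_i})\bigr), \]
whose left-hand side is $m=\dim_\RR\cA(X)$ by \eqref{eq:dim}.

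It remains to upgrade this numerical identity to the amoeba statement. From $X\subseteq X_1\times X_2$ we get $\cA(X)\subseteq\cA(X_1)\times\cA(X_2)$, hence $\dim_\RR\cA(X)\leq\dim_\RR\cA(X_1)+\dim_\RR\cA(X_2)$. For the reverse inequality, note $\one\in X_i$ (since $\one\in V$), and by Lemma~\ref{lm:dpLog} the map $d_\one(\Log|_{X_i})$ is $v\mapsto\Re(v)$ on $V_i$, whose rank is $\dim_\RR V_i-\dim_\RR(V_i\cap i\RR^{P_i})=\dim_\RR V_i-\dim_\RR(V_i\cap\RR^{P_i})$, the last equality because multiplication by $i$ is an $\RR$-linear automorphism of the complex space $V_i$ interchanging $V_i\cap\RR^{P_i}$ and $V_i\cap i\RR^{P_i}$. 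Since $\dim_\RR\cA(X_i)$ is by definition the maximal rank of $d_p(\Log|_{X_i})$, we obtain $\dim_\RR\cA(X_i)\geq\dim_\RR V_i-\dim_\RR(V_i\cap\RR^{P_i})$; summing over $i$ and invoking the displayed identity yields $\dim_\RR\cA(X_1)+\dim_\RR\cA(X_2)\geq m$, which closes the loop. I do not expect a genuine obstacle: all the conceptual content is in Proposition~\ref{prop:Hadamard}, and what is left is careful bookkeeping of real versus complex dimensions plus the (immediate) check that $P_1$ and $P_2$ are both nonempty. The same argument applied to the full level-set partition of $w$ rather than a two-part coarsening shows that this finer partition is already additive, which is the form in which one would feed it into the induction proving $\geq$ in \eqref{eq:Main}.
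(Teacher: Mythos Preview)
Your splitting argument via the diagonalisable operator $D_w$ is correct and in fact a bit cleaner than the paper's (which iterates Hadamard multiplication by a vector with entries in $(0,1]$ and passes to the limit). The identification $U\cap\CC^{P_i}=\pi_{P_i}(U)=V_i+\RR^{P_i}$ and the resulting numerical identity
\[
m=\dim_\RR V-\dim_\RR(V\cap\RR^n)=\sum_{i=1}^2\bigl(\dim_\RR V_i-\dim_\RR(V_i\cap\RR^{P_i})\bigr)
\]
are fine.

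The gap is in your final ``sandwich.'' You first derive $m=\dim_\RR\cA(X)\le\dim_\RR\cA(X_1)+\dim_\RR\cA(X_2)$ from the inclusion $\cA(X)\subseteq\cA(X_1)\times\cA(X_2)$. Then, announcing ``for the reverse inequality,'' you show that $\dim_\RR\cA(X_i)\ge r_i:=\dim_\RR V_i-\dim_\RR(V_i\cap\RR^{P_i})$ and sum to get $\dim_\RR\cA(X_1)+\dim_\RR\cA(X_2)\ge r_1+r_2=m$. But this is the \emph{same} inequality $m\le S$ you already had, not its reverse; nothing you have written bounds $S:=\sum_i\dim_\RR\cA(X_i)$ from above by $m$. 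Concretely, the rank of $d_{\one}(\Log|_{X_i})$ is only a lower bound for $\dim_\RR\cA(X_i)$, and a priori $\one_i$ need not be a point of maximal rank for $X_i$, so one could have $\dim_\RR\cA(X_i)>r_i$.

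This is exactly the issue the paper's proof confronts head-on: it \emph{assumes} that $\one_i$ is a point where $d_{\one_i}\Log$ has maximal rank on $X_i$ (so that $\dim_\RR\cA(X_i)=r_i$), and justifies this by noting that the original $p_0$ can be chosen in an open dense subset of $X$ so that, for every two-part partition of $[n]$, each projection $\pi_{P_i}(p_0)$ is generic in $X_i$. Since there are only finitely many partitions, this is a finite intersection of open dense conditions. With that choice, the numerical identity immediately gives $m=\dim_\RR\cA(X_1)+\dim_\RR\cA(X_2)$. Your argument can be repaired by inserting this genericity step; without it, the loop does not close.
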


\begin{proof}
In this case, $\ker d_{\one} \Log=V \cap (i \RR^n)$ has real dimension at least $2$. Hence so does $V \cap \RR^n$. Let $v \in V \cap \RR^n$ be a vector linearly independent from $\one$. After adding a suitable multiple of $\one$, we may assume that all entries of $v$ are positive, and after scaling we may assume that the maximal entry of $v$ equals $1$. Let $P_1 \subseteq [n]$ be the positions where $v$ takes this maximal value $1$, and let $P_2$ be the complement of $P_1$ in $[n]$. By construction, $P_1$ and $P_2$ are both nonempty.

By Proposition~\ref{prop:Hadamard}, $V+\RR^n$ is preserved under Hadamard
multiplication with $v$. Iterating this multiplication and taking the
limit, we find that $V+\RR^n$ is preserved under setting the coordinates
labelled by $P_2$ to zero. 
Then $V+\RR^n$ is also preserved under
setting the coordinates labelled by $P_1$ to zero.  Hence we
have 
\[ V+\RR^n = (V_1 + \RR^{P_1}) \times (V_2 +
\RR^{P_2}),\]
where $V_1,V_2$ are defined in the proposition.

We assume that for $i=1,2$, the all-one vector $\one_i \in V_i \subseteq
\CC^{P_i}$ has the same property required of $\one$, namely,
that $\dim_\RR \cA(X_i)$ equals the rank of the linear map $d_{\one_i}
\Log:V_i \to \RR^{P_i}$. (This might not follow from the corresponding
property of $\one$, but it may be achieved by picking the original $p_0$
in a suitable dense subset of $X$ and then dividing by that $p_0$.)

Now we have 
\begin{align*} \dim_\RR \cA(X)
&= \dim_\RR V - \dim_\RR (V \cap \RR^n)
= \dim_\RR (V+\RR^n)- \dim_\RR \RR^n\\
&= \dim_\RR (V_1 + \RR^{P_1}) + \dim_\RR (V_2+ \RR^{P_2}) -
|P_1|-|P_2|\\
&= (\dim_\RR (V_1 + \RR^{P_1})-\dim_\RR \RR^{P_1})+
(\dim_\RR (V_2 + \RR^{P_2})-\dim_\RR \RR^{P_2})\\
&= (\dim_\RR V_1 - \dim_\RR (V_1 \cap \RR^{P_1}))
+ (\dim_\RR V_2 - \dim_\RR (V_2 \cap \RR^{P_2}))\\
&= \dim_\RR \cA(X_1) + \dim_\RR \cA(X_2),
\end{align*}
as desired. Here we use the dimension formula
for vector subspaces and \eqref{eq:dim} three times
for $V,V_1,$ and $V_2$.
\end{proof}

\begin{proof}[Proof of Theorem~\ref{thm:Main}.]
The inequality $\leq$ was proved in \S\ref{ssec:ineq1}. For $\geq$
we proceed by induction on $n$; we therefore assume that the
inequality holds for all strictly smaller values of $n$. 

Now if $\dim_\RR \cA(X)=2d-1$, where $d=\dim_\CC V$, then $\geq$ is
witnessed by the partition of $[n]$ into a single part $P_1$.  Otherwise,
by Proposition~\ref{prop:Split}, there is a partition
$\{P_1,P_2\}$ of $[n]$ such that
\[\dim_\RR \cA(X)=\dim_\RR \cA(X_1) + \dim_\RR \cA(X_2) \]
where $X_i:=V_i \cap (\CC^*)^n$ and $V_i$ is the projection
of $V$ onto $\CC^{P_i}$. Since $P_1$ and $P_2$ both have
cardinalities strictly smaller than $n$, the induction
hypothesis applies: there exist partitions
$\{P_{i1}, \ldots, P_{ik_i}\}$ of $P_i$ for $i=1,2$ with 
\[\dim_\RR \cA(X_i)=\sum_{j=1}^{k_i} (2 \, r_{V_i} (P_{ij}) -1
). \]
Then the partition $\{P_{11},\ldots,P_{1k_1},
P_{21}, \ldots, P_{2k_2}\}$ of $[n]$ has the desired
property for $X$.
\end{proof}

\section{The function $r'$ is a rank function} \label{sec:AmoebaMatroid}

In this section, we analyse the right-hand side of the amoeba dimension
formula in Theorem~\ref{thm:Main}, and show that it is the rank function
of another matroid (Theorem~\ref{matroid}). We fix a finite
set $E$.

\subsection{Preliminaries on multisets of subsets}

We study finite multisets of subsets of $E$ and
denote these by boldface letters such as $\cS$. If $\cS,\cT$ are such
multisets, then so is their multiset union $\cS \cup \cT$.
We write $\# \cS$ for the number of elements of $\cS$
counting multiplicities.

Denote by $\succeq$ the transitive relation on
the set of finite multisets of subsets of $E$ such that $\cS\succeq\cT$
if and only if there is a sequence of multisets
$$\cS=\cS_1,\ldots,\cS_\ell=\cT$$ 
so that each $\cS_{i+1}$ arises from $\cS_i$ by replacing some {\em
intersecting pair} $S, S' \in \cS_i$, i.e., a pair such that $S \cap S'
\neq \emptyset$, with the pair $S\cap S', S\cup S'$. Note that we have
$\emptyset\not \in \cS\Rightarrow \emptyset\not \in \cT$. Furthermore,
if we define $n(\cS):=\sum_{S\in \cS} |S|^2$, then we have $n(\cS_{i+1})
\geq n(\cS_i)$ with equality if and only if $\cS_{i+1}=\cS_i$. Hence

$$\cS\succeq\cT\Longrightarrow n(\cS) \leq n(\cT),$$ 
with equality if and only if $\cS=\cT$. It
follows that $\succeq$ is a partial order on finite multisets of
subsets of $E$. Also, since a multiset $\cS$ with $k$ elements (counting
multiplicities) has $n(\cS)\leq k |E|^2$, there are no infinite decreasing sequences in the partial order $\succeq$. Hence, for any multiset $\cS$, there exists $\cT \preceq \cS$ with $\cT$ minimal with respect to $\preceq$, that is, $\cT \succeq \cU$ implies $\cT= \cU$. Moreover $\cT$ is minimal with respect to $\preceq$ if and only if it is {\em cross-free}: there are no sets $T, T'\in \cT$ that {\em cross} in the sense that $T\cap T', T\setminus T', T'\setminus T$ are all nonempty. 

For any multiset $\cS$ of subsets of $E$, the {\em finest common
coarsening} is the set of subsets of $E$ defined by 
$$\fcc(\cS):= \left\{ T\subseteq \bigcup
 \cS \mid T\text{ is an inclusion-wise minimal nonempty
 set so that } \forall S \in \cS: S\cap
 T=\emptyset\text{ or } S\subseteq T\right\}.$$
Intuitively, every $T\in\fcc(\cS)$ is obtained from starting with some nonempty $S\in\cS$, 
and keep merging with any other $S'\in\cS$ with nonempty intersection until it stabilises.
In particular, $\fcc(\cS)$ is a partition of $\bigcup \cS$, and
the definition of the partial order
$\succeq$ implies that $$\cS\succeq\cT\Longrightarrow\fcc(\cS)=
  \fcc(\cT).$$
Note that if $\cT$ is cross-free, then $\fcc(\cT)\subseteq \cT$.

For a partition $\cP$ of $S \subseteq E$ and a partition
$\cP'$ of $S' \subseteq E$, we define 
$$\cP\vee \cP':= \fcc(\cP\cup \cP')\text{ and }\cP\wedge \cP':=  \{P\cap
 P': P\in \cP, P'\in \cP'\}\setminus\{\emptyset\}.$$
Then $\cP\vee \cP'$ is a partition of $S\cup S'$, and $\cP\wedge \cP'$
is a partition of $S\cap S'$.

\subsection{A new matroid from $M$} 

Now let $M$ be a loopless matroid on $E$ with rank function
$r:2^E\rightarrow\mathbb{N}$. Recall from \S\ref{ssec:Main}
that, for a multiset $\cS$ consisting of nonempty subsets
$S_1,\ldots,S_k$ of $E$, we have defined 
$$ \tilde{r}(\cS)=\tilde{r}(S_1,\ldots S_k):=\sum_{i=1}^k
2r(S_i)-1.$$ 
Furthermore, we have defined $r':2^{E}\rightarrow\mathbb{N}$ as
\[
r'(S):=\min\left\{\tilde{r}(\cP) \mid \cP
\text{ is a partition of } S \right\}. \]
A partition $\cP$ of $S$ with $r'(S)=\tilde{r}(\cP)$ will be called an
{\em optimal partition of $S$} or simply {\em optimal for
$S$}. 

The submodularity of $r$ implies that, if
$\cS=\cS_1,\ldots,\cS_\ell=\cT$ is a sequence witnessing
$\cS \succeq \cT$ and $\cS_{i+1}$ arises from $\cS_i$ by
replacing the pair $S,S'$ by $S \cup S',S \cap S'$, then 
$$\tilde{r}(\cS_i)-\tilde{r}(\cS_{i+1})=(2r(S)-1)+ (2r(S')-1)- (2r(S\cup S')-1)-(2r(S\cap S')-1)\geq 0$$
for each $i$, so that 
$$\cS\succeq\cT\Longrightarrow
\tilde{r}(\cS)\geq \tilde{r}(\cT).$$

\begin{lm} \label{rank}Let $\cP, \cP'$
be partitions of $S, S'\subseteq E$, respectively. There is a
partition $\cQ$ of $S\cap S'$ so that $$\tilde{r}(\cP)+\tilde{r}(\cP')\geq \tilde{r}(\cP\vee \cP')+ \tilde{r}(\cQ),$$
 $\cQ$ is a coarsening of $\cP\wedge \cP'$, and $\#\cP+\#\cP'=\#\cP\vee\cP'+\#\cQ$.
\end{lm}

In the proof, we use the notation $c(\cS)_e$ for the number of sets $S$
in the multiset $\cS$ that contain a given $e\in E$ (counting multiple
occurrences of $S$). Note that $$\cS\succeq\cT\Longrightarrow c(\cS)_e=
c(\cT)_e.$$

\proof Set $\cS:=\cP \cup \cP'$ and let $\cT$ be a cross-free multiset so that $\cS\succeq \cT$. We have 
$$\cP\vee \cP'= \fcc(\cP\cup \cP')=\fcc(\cS)=\fcc(\cT),$$
Since $\cT$ is cross-free, we have $\fcc(\cT)\subseteq \cT$. Let $\cQ$ be the multiset that arises from $\cT$ by taking away $\fcc(\cT)$. 

Then, for each $e \in E$, 
$$c(\cQ)_e=c(\cT)_e-c(\fcc(\cT))_e=c(\cS)_e-c(\fcc(\cS))_e=\left\{\begin{array}{ll} 
1-1=0&\text{if }e\in S\cup S'\text{ and }e\not\in S\cap S'\\
2-1=1&\text{if }e\in S\cap S'\\
0-0=0&\text{if }e\not\in S\cup S'
\end{array}\right.$$
So $\cQ$ is a partition of $S\cap S'$, and since each element of $\cT$ arises by taking unions and intersections starting from $\cP\cup\cP'$, $\cQ$ is a coarsening of $\cP\wedge \cP'$. Since $\cS\succeq \cT$, we have 
$$\tilde{r}(\cP)+\tilde{r}(\cP')=\tilde{r}(\cS)\geq  \tilde{r}(\cT)= \tilde{r}(\cP\vee \cP')+ \tilde{r}(\cQ)$$
as required. Finally, $\#\cP+\#\cP'=\#\cS=\#\cT=\#(\cP\vee \cP')+ \#\cQ.$
\endproof

\subsection{Proof of Theorem~\ref{matroid}}

\proof We show that $r'$
satisfies the matroid rank axioms:
 
\begin{enumerate}
\item 
$r'(S)\geq 0$ for all $S\subseteq E$: Let $\cP$ be an
optimal partition of $S$. As $M$ is loopless, we have $r(P)\geq 1$ whenever $P\neq \emptyset$, and so
$$r'(S)=\tilde{r}(\cP)=\sum_{P\in \cP} (2r(P)-1)\geq 0,$$ 
as required.

\item $r'(S) \leq |S|$ for all $S \subseteq E$: this follows by taking the partition $\cP$ to be the partition into singletons.

\item 
$r'(S')\leq r'(S)$ whenever $S'\subseteq S\subseteq E$: Let
$\cP$ be an optimal partition of $S$.
Then $$r'(S')\leq \tilde{r}(\cP\wedge\{S'\})\leq \tilde{r}(\cP) = r'(S)$$
as required. 

\item 
$r'(S)+r'(S')\geq r'(S\cup S')+r'(S\cap S')$ for all $S, S'\subseteq E$:
Let $\cP, \cP'$ be optimal partitions of $S, S'$,
respectively. By Lemma \ref{rank}, there is a partition $\cQ$
of $S\cap S'$ so that
$$r'(S)+r'(S')=\tilde{r}(\cP)+\tilde{r}(\cP')\geq \tilde{r}(\cP\vee \cP')+ \tilde{r}(\cQ)\geq r'(S\cup S')+r'(S\cap S'),$$
as required.\qedhere
\end{enumerate}
\endproof

\subsection{Structure of $M'$}

The matroid on $E$ with rank function $r'$ is denoted $M'$. 
We make a few observations on the structure of $M'$ in relation to $M$.

If $M$ and $N$ are matroids on a ground set $E$, then $M$ is
a {\em quotient} of $N$ if each flat of $M$ is also a flat
of $N$. 
\begin{lem} 
  The matroid $M$ is a quotient of $M'$.
\end{lem}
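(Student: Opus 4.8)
The plan is to deduce the lemma from a single monotonicity inequality between the two rank functions: for every pair $A \subseteq B \subseteq E$,
\[ r'(B) - r'(A) \;\geq\; r(B) - r(A). \]
Granting this, the lemma is immediate. If $F$ is a flat of $M$ and $e \in E \setminus F$, then taking $A = F$ and $B = F \cup \{e\}$ gives $r'(F \cup \{e\}) - r'(F) \geq r(F \cup \{e\}) - r(F) \geq 1$, so $F$ is closed in $M'$ as well; since $F$ was an arbitrary flat of $M$, this is precisely the statement that $M$ is a quotient of $M'$ in the sense defined above. (The inequality also says that $r' - r$ is order-preserving on $2^E$, which is the usual rank-function characterisation of the quotient relation, so one could equally invoke that.)

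To prove the inequality, I would fix $A \subseteq B$, let $\cP$ be an \emph{optimal} partition of $B$, so that $r'(B) = \tilde r(\cP) = \sum_{P \in \cP}(2r(P) - 1)$, and split $\cP$ into the parts $\cP_0$ that miss $A$ and the parts $\cP_1$ that meet $A$. The partition $\cP \wedge \{A\} = \{P \cap A : P \in \cP_1\}$ of $A$ is a competitor for $r'(A)$, so $r'(A) \leq \sum_{P \in \cP_1}(2r(P \cap A) - 1)$; subtracting, the $-1$ terms indexed by $\cP_1$ cancel and one obtains
\[ r'(B) - r'(A) \;\geq\; \sum_{P \in \cP_0}(2r(P) - 1) \;+\; 2\sum_{P \in \cP_1}\bigl(r(P) - r(P \cap A)\bigr). \]
It then remains to bound $r(B) - r(A)$ by this right-hand side. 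Here I would use that $\{P \cap A : P \in \cP_1\}$ is genuinely a partition of $A$ (the parts of $\cP$ are disjoint and cover $B \supseteq A$), enumerate $\cP_1 = \{P_1, \dots, P_s\}$, and climb the chain $A = A_0 \subseteq A_1 \subseteq \cdots \subseteq A_s = \bigcup_i P_i$ with $A_j := P_1 \cup \cdots \cup P_j \cup \bigcup_{i>j}(P_i \cap A)$. Disjointness forces $A_j = A_{j-1} \cup P_j$ and $A_{j-1} \cap P_j = P_j \cap A$, so submodularity of $r$ gives $r(A_j) - r(A_{j-1}) \leq r(P_j) - r(P_j \cap A)$, and telescoping yields $r(\bigcup_i P_i) - r(A) \leq \sum_{P \in \cP_1}(r(P) - r(P \cap A))$. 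Adjoining the parts of $\cP_0$ one at a time and applying submodularity again gives $r(B) - r(\bigcup_i P_i) \leq \sum_{P \in \cP_0} r(P)$. Adding these, and using $r(P) \geq 1$ for the nonempty parts in $\cP_0$ (this is where looplessness of $M$ enters) together with $r(P) \geq r(P \cap A)$ for those in $\cP_1$, shows that $r(B) - r(A)$ is at most the displayed right-hand side.

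The only real obstacle is organising the telescoping estimate for $r(\bigcup_i P_i) - r(A)$ so that each step of the chain crosses just one part of $\cP$; this is exactly where it matters that $\cP$ is a \emph{partition} rather than an arbitrary multiset of subsets, since then $A_{j-1} \cap P_j$ collapses to the manageable set $P_j \cap A$ instead of something larger. Everything else is bookkeeping, and nothing beyond submodularity of $r$ and the definition of $r'$ is needed.
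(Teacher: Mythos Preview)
Your argument is correct. The telescoping chain with $A_{j-1}\cap P_j = P_j\cap A$ works exactly as you say because the $P_i$ are disjoint, and the final comparison
\[
\sum_{P\in\cP_0} r(P)+\sum_{P\in\cP_1}\bigl(r(P)-r(P\cap A)\bigr)\ \le\ \sum_{P\in\cP_0}(2r(P)-1)+2\sum_{P\in\cP_1}\bigl(r(P)-r(P\cap A)\bigr)
\]
follows since $r(P)\ge 1$ for the nonempty parts in $\cP_0$ (looplessness) and $r(P)\ge r(P\cap A)$ for those in $\cP_1$.

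Your route is genuinely different from the paper's. The paper argues directly at the level of flats: assuming a flat $F$ of $M$ fails to be a flat of $M'$, it takes $e$ with $r'(F+e)=r'(F)$, picks an optimal partition $\{P_1,\dots,P_k\}$ of $F+e$ with $e\in P_1$, and observes that replacing $P_1$ by $P_1-e$ gives a competitor partition of $F$; comparing values forces $r(P_1)=r(P_1-e)$, hence $e$ lies in the $M$-closure of $P_1-e\subseteq F$, contradicting that $F$ is a flat. This is a two-line contradiction using only one optimal partition and one substitution. By contrast, you prove the stronger global inequality $r'(B)-r'(A)\ge r(B)-r(A)$ for all $A\subseteq B$, which is the rank-function formulation of ``$M$ is a quotient of $M'$'' and is of independent interest; the cost is the chain/telescoping bookkeeping. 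Either approach establishes the lemma.
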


\begin{proof}
  Let $F$ be a flat of $M$. Assume that $F$ is not a flat in $M'$. Then
	there exists $e \in E \setminus F$
	such that $r'(F +e) = r'(F)$. Let $\{P_1,
	\dots, P_k\}$ 
	be an optimal partition of $F +e$. We may assume $e \in P_1$.
	Now $\tilde{r}(P_1, \dots, P_k) = r'(F)$ implies that 
	$r(P_1) = r(P_1 -e)$. In particular, $e$ is contained 
	in any flat containing $P_1 -e \subset F$, a contradiction
	to $F$ being a flat in $M$. Hence the claim follows. 
\end{proof}
The {\em truncation} of a matroid $M$ of rank $d>0$ is the matroid $N$ on the same ground set with rank function $r_N(S):=\min\{r_M(S), d-1\}$.
\begin{lem} \label{lem:truncation}If $M$ has rank $d>1$ and $N$ is the truncation of $M$, then $$r'_N(S)=\min\{r'_M(S), 2d-3\}$$ for all $S\subseteq E$.
\end{lem}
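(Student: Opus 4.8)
The goal is to prove that truncation interacts with the operation $M \mapsto M'$ in the simple way stated: $r'_N(S) = \min\{r'_M(S), 2d-3\}$ where $N$ is the truncation of $M$ and $d = r_M(E) > 1$.

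\medskip

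\textbf{Proof plan.} The plan is to prove the two inequalities separately. For the inequality $r'_N(S) \le \min\{r'_M(S), 2d-3\}$, I would argue as follows. First, since $r_N \le r_M$ pointwise, any partition $\cP$ of $S$ satisfies $\tilde r_N(\cP) \le \tilde r_M(\cP)$; taking $\cP$ optimal for $S$ in $M$ gives $r'_N(S) \le r'_M(S)$. For the bound $r'_N(S) \le 2d-3$, I would use the trivial partition $\{S\}$ together with the observation that $r_N(S) \le d-1$, so $\tilde r_N(\{S\}) = 2r_N(S) - 1 \le 2(d-1)-1 = 2d-3$; hence $r'_N(S) \le 2d-3$. (One must note $N$ is still loopless since $d > 1$, so $r'_N$ is well-defined and the relevant inequalities hold; also if $S = \emptyset$ both sides are $0$.)

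\medskip

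\textbf{The reverse inequality} $r'_N(S) \ge \min\{r'_M(S), 2d-3\}$ is the substantive part and I expect it to be the main obstacle. Let $\cP = \{P_1, \dots, P_k\}$ be an optimal partition of $S$ for $N$, so $r'_N(S) = \sum_i (2 r_N(P_i) - 1)$. For each part $P_i$ we have $r_N(P_i) = \min\{r_M(P_i), d-1\}$. If $r_N(P_i) = r_M(P_i)$ for every $i$, then $\tilde r_N(\cP) = \tilde r_M(\cP) \ge r'_M(S)$ and we are done. So suppose some part, say $P_1$, has $r_M(P_1) \ge d$, i.e. $r_M(P_1) = d$ (it cannot exceed $d$), while $r_N(P_1) = d-1$. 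The idea is that such a part $P_1$ alone already ``costs'' $2(d-1)-1 = 2d-3$ in $\tilde r_N(\cP)$, and the remaining parts contribute at least $0$ each (since $N$ is loopless, each $2r_N(P_j) - 1 \ge 1 > 0$), so $r'_N(S) = \tilde r_N(\cP) \ge 2d - 3 \ge \min\{r'_M(S), 2d-3\}$. That disposes of all cases.

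\medskip

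\textbf{Where care is needed.} The argument above is almost immediate once organised by the dichotomy ``every part is $M$-rank-preserved under truncation'' versus ``some part is spanning in $M$''. The only subtle point is making sure the second case is correctly handled: a spanning part $P_1$ in $M$ has $N$-rank exactly $d-1$ (using $d > 1$ so this is positive), contributing exactly $2d-3$, and since all other contributions $2r_N(P_j)-1$ are strictly positive the total is $\ge 2d-3$. No submodularity or Lemma~\ref{rank} machinery is needed here; it is purely a comparison of the summands $2r(P_i)-1$ under the two rank functions. I would also double-check the edge case $d = 2$, where $N$ is the rank-$1$ truncation and $2d-3 = 1$: then any loopless partition gives $\tilde r_N(\cP) = \#\cP \ge 1$, consistent with the formula. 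Thus the full statement follows by combining the two inequalities.
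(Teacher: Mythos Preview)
Your proposal is correct and follows essentially the same approach as the paper: both proofs take an $N$-optimal partition $\cP$ and split into the two cases ``every part has $r_N(P_i)=r_M(P_i)$'' versus ``some part $P_1$ is spanning in $M$'', using in the latter case that the single term $2r_N(P_1)-1=2d-3$ already forces $\tilde r_N(\cP)\ge 2d-3$. Your write-up is in fact slightly more explicit than the paper's, which phrases the reverse inequality as ``assume $r'_N(S)<r'_M(S)$ and deduce $r'_N(S)=2d-3$'' and leaves the reader to check that this, together with $r'_N(S)\le r'_M(S)$ and $r'_N(S)\le 2d-3$, yields the claimed minimum.
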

\begin{proof}
	Clearly $r'_N(S)\leq r'_M(S)$. Suppose $r'_N(S)< r'_M(S)$. Let $	
	\cP$ be an optimal partition of $S$ with respect to $N$. Then 
	$$\sum_{P\in \cP} (2r_N(P)-1) = \tilde{r}_N(\cP)=
	r'_N(S)< r'_M(S)\leq  \tilde{r}_M(\cP)=\sum_{P\in
	\cP} (2r_M(P)-1)$$
	hence $r_M(P)> r_N(P)=\min\{r_M(P), d-1\}$ for some $P\in \cP$. Then $r_M(P)=d$ and 
	$r_N(P)=d-1$, so that $$2d-3 \geq 2r_N(S)-1= \tilde{r}_N(\{S\})\geq r'_N(S)=\tilde{r}_N(\cP)\geq 2r_N(P)-1 = 2d-3.$$
	Then $r'_N(S)=2d-3$, as required.
\end{proof}

Let $c\in\mathbb{N}$. A matroid $M$ on the ground set $E$ and with rank function $r$ is {\em $c$-connected} if there is no subset $S\subseteq E$ so that $|S|\geq c, |E\setminus S|\geq c$, and 
$$r(S)+r(E\setminus S)-r(E) < c.$$
The next example illustrates that even if we assume that $M$ has a high connectivity (and representable over $\mathbb{C}$), the rank of $M'$ may still be strictly less than the trivial bound $\min\{|E|,  2r(M)-1\}$ from 
Example~\ref{ex:trivial}. 
Indeed, a high connectivity will not even force the existence of an optimal partition into few parts.

\begin{example} Let $c,k$ be positive integers so that $k>2c+1$. Consider the matroid $M$ that arises from the disjoint union of matroids $M_1, \ldots, M_k$ by truncating $c$ times, where each $M_i$ has ground set $E_i$ and is isomorphic to the uniform matroid $U_{c,2c}$. Let $E$ be the ground set of $M$, and let $\cP_0:=\{E_1,\ldots, E_k\}$.  Then the ground set of $M$ has $n:=|E|=2ck$ elements, the rank of $M$ is $d:=ck-c$, and the rank in $M$ of $S\subseteq E$ equals 
$$r(S)=\min\left\{d, \sum_{i=1}^k \min\{c, |S\cap E_i|\}\right\}.$$

We first verify that $M$ is $c$-connected. Let $S\subseteq E$ with $|S|\geq c$ and $|E\setminus S|\geq c$. If $r(E\setminus S)=d$, then 
$r(S)+r(E\setminus S)-r(E) = r(S)\geq \min\{c,|S|\}=c$. If $r(S)=d$, then similarly
$r(S)+r(E\setminus S)-r(E) \geq c$. In the remaining case $r(S)<d$ and $r(E\setminus S)<d$, hence 
$$r(S)+r(E\setminus S)-r(E)=\sum_{i=1}^k \left(\min\{c,|S\cap E_i|\}+ \min\{c,|S\setminus E_i|\}\right)-(ck-c)\geq ck-(ck-c)=c$$
as required.

Next, we show that the rank of $M'$ is strictly less than $\min\{n, 2d-1\}$, and that any optimal partition will have at least $k$ parts.
Consider an optimal partition $\cP$ of $E$ so that $\#\cP$ is as small as possible. Let $i\in [k]$. By Lemma \ref{rank}, there exists a partition $\cQ$ of $E_i$ so that 
$\tilde{r}(\cP)+\tilde{r}(\{E_i\})\geq \tilde{r}(\cP\vee\{E_i\})+\tilde{r}(\cQ).$
Since $\{E_i\}$ is an optimal partition for $E_i$, if follows that $\cP\vee\{E_i\}$ is optimal partition for $E$. By our choice of $\cP$, we have $\#\cP\leq \#(\cP\vee\{E_i\})$, so that $E_i\subseteq P$ for some $P\in \cP$. It follows that $\cP$ is a coarsening of $\cP_0$. If $r(P)<d$ for each $P\in \cP$, then 
$\tilde{r}(\cP)=2ck-\#\cP$, so that $\cP=\cP_0$ by the optimality of $\cP$. If on the other hand $r(P)=d$ for some part $P\in \cP$, then 
$\tilde{r}(\cP)\geq 2d-1=2ck-2c-1>2ck-k=\tilde{r}(\cP_0)$ as $k>2c+1$, contradicting that $\cP$ is optimal. We conclude that $\cP=\cP_0$, and in particular, that there are no optimal partitions with less than $\#\cP_0=k$ parts. Then the rank of $M'$ is $\tilde{r}(\cP_0)=2ck-k<\min\{2ck, 2(ck-c)-1\}=\min\{n, 2d-1\}$, as required.

Finally, as $U_{c,2c}$ is representable over $\C$ and taking disjoint
unions and truncation preserves representability over $\C$, $M$
is also the matroid of some subspace $V \subseteq \CC^{n}$,
and $M'$ the matroid of the amoeba $\cA(V \cap
(\CC^*)^{n})$ (by Theorem~\ref{thm:Main} and
Proposition~\ref{prop:AmoebaMatroid}).
\end{example}

\section{An algorithm for evaluating $r'$}
\label{sec:Algorithm}

We retain the notation of the previous section and continue to explore
the functions $r'$ and $\tilde{r}$ derived from a fixed rank function $r$
of a loopless matroid $M$ on a finite set $E$, aiming for an algorithm
to evaluate $r'(S)$ given any $S\subseteq E$.

\subsection{Optimal partitions form a lattice} 
\begin{lm} \label{lattice}Let $\cP, \cP'$ be optimal
partitions of $S, S'\subseteq E$, respectively. 
If $$r'(S)+r'(S')=r'(S\cup S')+r'(S\cap S'),$$ then  $\cP\vee \cP'$ is optimal for $S\cup S'$ and $\cP\wedge \cP'$ is optimal for $S\cap S'$.
\end{lm}
\proof
By Lemma \ref{rank}, there exists a partition $\cQ$ of $S\cap S'$ that is a coarsening of the partition $\cP\wedge \cP'$ so that 
$$r'(S)+r'(S')=\tilde{r}(\cP)+\tilde{r}(\cP')\geq \tilde{r}(\cP\vee \cP')+ \tilde{r}(\cQ)\geq r'(S\cup S')+r'(S\cap S').$$
By our assumption that  $r'(S)+r'(S')=r'(S\cup S')+r'(S\cap
S')$, we have $r'(S\cup S')=\tilde{r}(\cP\vee \cP')$ and
$r'(S\cap S')=\tilde{r}(\cQ)$. So $\cP \vee \cP'$ is optimal
for $S \cap S'$, and $\cQ$ is optimal for $S \cap S'$ as
well as a coarsening of $\cP \wedge \cP'$. 

Let $\cQ^*$ be an optimal partition of $S \cap S'$, as well
as a coarsening of  $\cP\wedge \cP'$, with $\#\cQ^*$ as
large as possible. By Lemma \ref{rank} there is a coarsening
$\cQ'$ of $\cP\wedge \cQ^*$ with 
$$r'(S)+r'(S \cap S')=\tilde{r}(\cP)+\tilde{r}(\cQ^*) \geq
\tilde{r}(\cP\vee \cQ^*)+ \tilde{r}(\cQ') \geq r'(S) + r'(S
\cap S'),$$
so that we again have equality throughout, 
and with $\#\cP+\#\cQ^*=\#(\cP\vee \cQ^*)+ \#\cQ'$. Then $\cQ'$ is also a coarsening of  $\cP\wedge \cP'$ and $r'(S\cap S')=\tilde{r}(\cQ')$, so that  $\#\cQ'\leq \#\cQ^*$ by our choice of $\cQ^*$. It follows that $\#\cP\geq \#(\cP\vee \cQ^*)$
and hence each element of $\cQ^*$ is a subset of an element
of $\cP$. Similarly, each element of $\cQ^*$ is the subset
of an element of $\cP'$. Then $\cQ^*=\cP\wedge \cP'$, and so
$\cP \wedge \cP'$ is optimal for $S \cap S'$. 
\endproof

\subsection{Coarsest optimal partition and submodular
minimisation}

It follows from Lemma \ref{lattice} that for any set $S\subseteq E$ there is a unique coarsest optimal partition 
$$\cP^*:=\bigvee_{\cP\text{ optimal for }S}\cP$$ 
so that each optimal partition for $S$ refines $\cP^*$. Similarly, there is a unique finest optimal partition 
$$\cP_*:=\bigwedge_{\cP\text{ optimal for }S}\cP$$
which refines each optimal partition for $S$.

We will describe an algorithm to calculate the coarsest optimal partition for any given $S\subseteq E$. 
The coarsest optimal partition $\cP$ for $S$ equals
\begin{equation}\label{parts}\cQ:=\{Q\subseteq S \mid Q\text{ is inclusion-wise maximal so that } r'(Q)=2r(Q)-1\}\end{equation}
To see this, note that because $\cP$ is optimal,
each part $P\in \cP$ has $r'(P)=2r(P)-1$ and hence is contained in one of the elements $Q\in\cQ$.  The inclusion $P\subseteq Q$ cannot be strict, for then $\cP\vee\{Q\}$ would be a coarser optimal partition for $S$ than $\cP$ by Lemma \ref{lattice}. Hence $\cP=\cQ$.

Because \eqref{parts} refers to the value $r'(Q)$, which we
do not yet know how to compute, \eqref{parts} 
is not directly useful for finding the coarsest partition
for a general subset $S\subseteq E$. But if we assume that
$S$ is independent in $M'$, i.e., that $|S|=r'(S)$, then
each $Q\subseteq S$ has $r'(Q)=|Q|$, so that we can identify
the part $Q$ of the coarsest optimal partition containing a
given $e\in S$ as the largest $Q\subseteq S$ such that $e\in
Q$ and $2r(Q)-1=|Q|$. Finding this $Q$ can be cast as a
submodular function minimisation problem; in the following
lemma to this effect, $B+e$ plays the role of $S$.

\begin{lm}\label{test}Let $B\subseteq E$ and $e\in E\setminus B$ be such that $r'(B+e)=|B+e|$. Then the function $f:2^B\rightarrow \mathbb{Q}$ determined by
\begin{equation}\label{cunningham}f(I):=2r(I+e)-1 - |I+e|-\frac{|I|}{2|B|}\end{equation}
is submodular, and  $J$ is a largest subset of $B$ so that $2r(J+e)-1=|J+e|$ if and only $f(J)=\min_I f(I)$.
\end{lm}
\proof The submodularity of $f$ follows from the submodularity of the
rank function $r$. For each $I\subseteq B$ we have
$|I+e|=r'(I+e)\leq \tilde{r}(\{I+e\})=2r(I+e)-1$; hence,
$$ f(I)\leq 0 ~ \Leftrightarrow ~ 2r(I+e)-1\leq |I+e| +\frac{|I|}{2|B|} ~ \Leftrightarrow ~ 2r(I+e)-1= |I+e| ~\Leftrightarrow ~f(I) = -\frac{|I|}{2|B|}. $$
Here, the second $\Leftrightarrow$ uses the fact that $0\leq\frac{|I|}{2|B|}\leq\frac{1}{2}$, while every other term is an integer.
Since $f(\emptyset)=0$, we have $0\geq \min_I f(I)$. The lemma follows.\endproof

That a submodular function $f$ obtained from a matroid rank function as in \eqref{cunningham} can be minimised in polynomial time was first established by Cunningham in \cite{Cunningham1984}. The weakly polynomial time algorithm of Lee, Sidford, and Wong \cite{Lee2015} for submodular set function minimisation takes
 $O(k^2\log(k)\cdot\gamma+k^3\log^{O(1)}(k))$ time to minimise $f$, where $k:=|B|$ and $\gamma$ is the time needed to evaluate $f$.

\subsection{Proof of Theorem~\ref{thm:Algorithm}} To make sure that an optimal partition for $S$ is the coarsest, it will suffice to consider the intersection of that partition with a spanning subset $S'$ of $S$. 

\begin{lm} \label{extend} Suppose that $S'\subseteq S\subseteq E$, and let $\cP', \cP$ be partitions of $S', S$ resp. so that $\cP'=\cP\wedge \{S'\}$. Then:
\begin{enumerate}
\item $\tilde{r}(\cP)=r'(S)=r'(S')$ if and only if $r(P\cap S')=r(P)$ for all $P\in\cP$ and $\cP'$ is optimal for $S'$, and
\item if $\tilde{r}(\cP)=r'(S)=r'(S')$, then $\cP$ is the coarsest optimal partition of $S$ if and only if  $\cP'$ is the coarsest
optimal partition of $S'$.
\end{enumerate}
\end{lm}

\proof We have 
$$\tilde{r}(\cP')=\sum_{P\in
\cP, P\cap S'\neq \emptyset} (2r(P\cap S')-1)\leq \sum_{P\in
\cP} (2r(P)-1)= \tilde{r}(\cP),$$
so that  $\tilde{r}(\cP')=\tilde{r}(\cP)$ if and only if $r(P\cap S')=r(P)$ for all $P\in\cP$. Since $r(P)>0$ for all $P\in \cP$, it follows that if  $\tilde{r}(\cP')=\tilde{r}(\cP)$, then $\#\cP'=\#\cP$.    
\begin{enumerate}
\item
If $\tilde{r}(\cP)=r'(S)=r'(S')$, then since $r'(S')\leq \tilde{r}(\cP')\leq \tilde{r}(\cP)$ we have $\tilde{r}(\cP')=\tilde{r}(\cP)$ and $r'(S')= \tilde{r}(\cP')$. Conversely if $\tilde{r}(\cP')=\tilde{r}(\cP)$ and $r'(S')= \tilde{r}(\cP')$, then $r'(S)\geq r'(S)= \tilde{r}(\cP')=\tilde{r}(\cP)\geq r'(S)$, and then we have equality throughout.
\item 
Now assume that $\tilde{r}(\cP)=r'(S)=r'(S')$, so that
$\cP'$ is optimal for $S'$ and $\tilde{r}(\cP')=\tilde{r}(\cP)$. 
Suppose that $\cP$ is not the coarsest optimal partition of $S$ and that say, $\cQ$  is a coarser optimal partition of $S$. Then $\cQ':=\cQ\wedge\{S'\}$ 
is an optimal partition for $S'$ by (1) and $\#\cQ'=\#\cQ<\#\cP=\#\cP'$, so that $\cP'$ is not the coarsest optimal partition of $S'$. Conversely, suppose that $\cP'$ is not coarsest and that say, $\cQ'$ is an optimal partition of $S'$ coarser than $\cP'$. Then $\cQ:=\cP\vee\cQ'$ is an optimal partition of $S$ by Lemma \ref{lattice}. It follows that $\cQ\wedge\{S'\}=\cQ'$, and hence $\#\cQ= \#\cQ'<\#\cP'=\#\cP$.  Then $\cP$ is not the coarsest optimal partition of $S$.
\end{enumerate}
This proves the two parts of the lemma.
\endproof

\begin{algorithm}
\caption{\label{alg:matroid} Coarsest optimal partition $\cP$ and basis $B$ for $S\subseteq E$}
\begin{algorithmic}
\IF{$S=\emptyset$}
\STATE Put $\cP:= \emptyset$, $B:= \emptyset$ and return $\cP$, $B$
\ELSE 
\STATE Pick $e\in S$, and put $S':=S-e$
\STATE Compute the coarsest optimal partition $\cP'$ and a basis $B'$ for $S'$
\IF{$r(P'+e)=r(P')$ for a $P'\in \cP'$}
\STATE Put $\cP:=\cP'\vee\{P'+e\}, B:=B'$
\STATE return $\cP$, $B$
\ELSE
\STATE Compute a largest set $J\subseteq B'$ such that $2r(J+e)-1=|J+e|$
\STATE Put $\cP:= \cP'\vee\{J+e\}$, $B:= B'+e$
\STATE return $\cP$, $B$
\ENDIF 
\ENDIF
 \end{algorithmic}
\end{algorithm}

\begin{thm} Given a set $S\subseteq E$, Algorithm \ref{alg:matroid} determines the coarsest optimal partition 
$\cP$ for  $S$ and a subset $B\subseteq S$ such that $$|B|=r'(B)=r'(S)=\tilde{r}(\cP).$$ Moreover, the algorithm runs in polynomial time, taking $O(nk+ k^3\log(k))$ rank evaluations in $M$, where $n:=|S|, k:=r'(S)$.
\end{thm}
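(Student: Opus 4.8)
The plan is to prove correctness by induction on $|S|$ using Lemma~\ref{extend}, and then bound the running time by carefully accounting for the recursion.

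\textbf{Correctness.} I would argue by induction on $n=|S|$. The base case $S=\emptyset$ is immediate: the empty partition has $\tilde r(\emptyset)=0=r'(\emptyset)$ and $B=\emptyset$. For the inductive step, pick $e\in S$ and set $S'=S-e$; by induction the recursive call returns the coarsest optimal partition $\cP'$ for $S'$ together with $B'\subseteq S'$ with $|B'|=r'(B')=r'(S')=\tilde r(\cP')$. There are two cases. If $r(P'+e)=r(P')$ for some $P'\in\cP'$, then adding $e$ to that part does not change $\tilde r$, so $\tilde r(\cP'\vee\{P'+e\})=\tilde r(\cP')=r'(S')\le r'(S)$; on the other hand $r'(S)\le r'(S')$ since also $r(B'+e)$ could exceed $r(B')$ --- wait, here one needs that $r'(S)\le r'(S')$ in general is false, so instead I would observe directly that $\cP:=\cP'\vee\{P'+e\}$ is a partition of $S$ with $\tilde r(\cP)=\tilde r(\cP')=r'(S')$, and since any partition of $S$ restricts (via $\wedge\{S'\}$) to a partition of $S'$ of no larger $\tilde r$-value by the displayed inequality in the proof of Lemma~\ref{extend}, we get $r'(S')\le r'(S)$, hence $\tilde r(\cP)=r'(S)$ and $\cP$ is optimal; moreover $B:=B'$ still satisfies $|B|=r'(B)=r'(S')=r'(S)$, and Lemma~\ref{extend}(2) applied with this $S'\subseteq S$ (noting $\cP\wedge\{S'\}=\cP'$ and $r(P\cap S')=r(P)$ for all $P\in\cP$ by construction) shows $\cP$ is the coarsest optimal partition of $S$. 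In the other case, $r(P'+e)>r(P')$ for every $P'\in\cP'$; then $e$ is not spanned by $S'$ in $M$, and $r'(S)=r'(S')+1$ since any partition of $S$ restricts to one of $S'$ and the part containing $e$ must have rank strictly larger, so the $\wedge\{S'\}$ operation drops $\tilde r$ by at most $1$ on that part and by nothing elsewhere. Here $B'+e$ is independent in $M'$: $|B'+e|=|B'|+1=r'(S')+1=r'(S)\ge r'(B'+e)\ge\dots$, and one checks equality, so $r'(B'+e)=|B'+e|$. Now Lemma~\ref{test} (with $B:=B'$, noting its hypothesis $r'(B'+e)=|B'+e|$ holds) legitimises computing the largest $J\subseteq B'$ with $2r(J+e)-1=|J+e|$ via submodular minimisation. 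Setting $\cP:=\cP'\vee\{J+e\}$, $B:=B'+e$, I would verify $\tilde r(\cP)=\tilde r(\cP')+(2r(J+e)-1-(2r(J)-1))=r'(S')+1=r'(S)$ (using $r(J+e)=r(J)+1$, which holds because $J+e$ attains $2r(J+e)-1=|J+e|$ forces $J+e$ independent in $M'$ hence in $M$), so $\cP$ is optimal; $|B|=|B'|+1=r'(S)$; and $\cP\wedge\{S'\}=\cP'$ with $r(P\cap S')=r(P)$ for all $P\in\cP$ (the new part $J+e$ meets $S'$ in $J$, and $r(J)=r(J+e)-1\ne r(J+e)$ --- so this fails!). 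Here I must instead take $S':=B'$ --- no: the clean route is to take $S'$ in Lemma~\ref{extend} to be a \emph{spanning} subset, i.e.\ apply Lemma~\ref{extend}(2) with the roles played by $B\subseteq S$: since $|B|=r'(B)=r'(S)$ and $\cP\wedge\{B\}$ is the partition $\{P\cap B:P\in\cP\}$, and one shows $r(P\cap B)=r(P)$ for all $P$ and that $\cP\wedge\{B\}$ is the coarsest optimal partition of $B$ (which over an $M'$-independent set is characterised concretely by \eqref{parts} as I noted), we conclude $\cP$ is coarsest for $S$. So the inductive bookkeeping should be organised around the pair $(\cP,B)$ with $B$ spanning, and correctness of ``coarsest'' extracted from Lemma~\ref{extend}(2) applied to $B\subseteq S$ at the end.

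\textbf{Running time.} The recursion has depth $n=|S|$, peeling off one element at a time. At each level the dominant cost is one submodular minimisation of a function of the form \eqref{cunningham} on the ground set $B'$, which has size $|B'|\le r'(S)=k$; by the Lee--Sidford--Wong bound this is $O(k^2\log k)$ evaluations of $f$ plus lower-order arithmetic, and each evaluation of $f$ is one rank evaluation in $M$. But the submodular minimisation step only occurs when $|B|$ strictly increases, which happens at most $k$ times over the whole run; at the other $\le n$ levels we only do the cheap test ``is $r(P'+e)=r(P')$ for some $P'\in\cP'$?'', costing $O(|\cP'|)\le O(n)$ rank evaluations, hence $O(n^2)$ --- hmm, to land on $O(nk)$ one observes $|\cP'|\le r'(S')\le k$ since each part has rank $\ge1$, giving $O(k)$ per level and $O(nk)$ total. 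Summing, the submodular minimisations contribute $k\cdot O(k^2\log k)=O(k^3\log k)$ rank evaluations, for a grand total of $O(nk+k^3\log k)$ rank evaluations in $M$, as claimed; all the auxiliary operations ($\vee$, $\wedge$, maintaining the partition) are polynomial in $n$.

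\textbf{Main obstacle.} The genuinely delicate point is not the complexity accounting but verifying, in the second case, that the set $J+e$ produced by submodular minimisation slots into $\cP'$ to give the \emph{coarsest} optimal partition of $S$, and that $B'+e$ remains a spanning independent set of $M'$. The argument hinges on the fact that over an $M'$-independent set the coarsest optimal partition is governed by the explicit formula \eqref{parts} combined with Lemma~\ref{lattice}, and that Lemma~\ref{extend} lets us transfer ``coarsest'' between $S$ and a spanning subset. I expect the cleanest write-up to maintain the invariant ``$B$ is a basis of $S$ in $M'$ and $\cP\wedge\{B\}$ is the coarsest optimal partition of $B$'' through the recursion, deducing coarsestness for $S$ itself only at the point of return via Lemma~\ref{extend}(2).
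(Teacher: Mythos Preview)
Your overall architecture---induction on $|S|$, splitting into the two cases, applying Lemma~\ref{extend}(2) with the spanning subset $B\subseteq S$ to transfer ``coarsest'', and the running-time accounting---matches the paper and is sound. Case~1 is fine once you finish self-correcting. The running-time analysis is essentially identical to the paper's.

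However, Case~2 contains two genuine errors.

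\emph{First}, the claim ``$e$ is not spanned by $S'$ in $M$'' is false. Take $M=U_{2,3}$ on $\{1,2,3\}$, $S'=\{1,2\}$, $e=3$. The coarsest optimal partition of $S'$ is $\cP'=\{\{1\},\{2\}\}$, and indeed $r(\{1\}+e)>r(\{1\})$ and $r(\{2\}+e)>r(\{2\})$; yet $e$ \emph{is} in the $M$-span of $S'$. Consequently your argument for $r'(S)=r'(S')+1$ collapses: for an arbitrary optimal partition $\cQ$ of $S$, the part $Q\ni e$ may well satisfy $r(Q)=r(Q-e)$. The paper proves $r'(S)=r'(S')+1$ by contradiction: if equality held, take the \emph{coarsest} optimal $\cQ$ for $S$; Lemma~\ref{extend}(2) forces $\cQ\wedge\{S'\}=\cP'$, and then Lemma~\ref{extend}(1) gives $r(Q-e)=r(Q)$ for the part $Q\ni e$, so $Q-e\in\cP'$ violates the case hypothesis.

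\emph{Second}, the formula $\tilde r(\cP'\vee\{J+e\})=\tilde r(\cP')+\bigl(2r(J+e)-1\bigr)-\bigl(2r(J)-1\bigr)$ is not valid: $J\subseteq B'$ need not be a part of $\cP'$, and the $\vee$ may merge several parts of $\cP'$, so the change in $\tilde r$ is not just $2(r(J+e)-r(J))$. (Your justification that $J+e$ is independent in $M$ also fails: $M'$-independence does not imply $M$-independence, since $M$ is a quotient of $M'$.) The paper sidesteps any direct computation of $\tilde r(\cP)$ and instead shows $\cP$ is optimal via Lemma~\ref{lattice}: since $\{J+e\}$ is optimal for $J+e$ (because $r'(J+e)=|J+e|=2r(J+e)-1$) and $r'(S')+r'(J+e)=r'(S)+r'(J)$, the lattice lemma gives that $\cP'\vee\{J+e\}$ is optimal for $S'\cup(J+e)=S$.

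Finally, to show $\cP$ is \emph{coarsest} in Case~2, the paper argues that $\cQ:=\cP\wedge\{B\}$ is the coarsest optimal partition of $B$ by contradiction: a strictly coarser optimal partition of $B$ would contain a part $Q'$ not contained in any part of $\cP$; if $e\notin Q'$ one contradicts coarsestness of $\cP'$, and if $e\in Q'$ then $Q'-e\supsetneq J$ contradicts maximality of $J$. You correctly flagged this as the delicate point but did not supply the argument; your suggestion to use \eqref{parts} directly is on the right track but needs exactly this maximality-of-$J$ step to go through.
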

\proof We first argue that the output of the algorithm is correct, using induction on $|S|$. 
The case that $S=\emptyset$ is trivial, so assume that $S\neq \emptyset$, and let $e\in S$. By induction, the algorithm correctly computes the coarsest optimal partition $\cP'$ for $S':=S-e$ and a subset $B'\subseteq S'$ such that $|B'|=r'(B)=r'(S')=\tilde{r}(\cP')$ initially. 

If $r(P'+e)=r(P')$ for some $P'\in\cP'$, then consider the output $\cP:=\cP'\vee\{P'+e\}$ and $B:=B'$ in this case. By construction of $\cP$, we have $\cP'=\cP\vee\{S'\}$, and $r(P-e)=r(P)$ for all $P\in \cP$. By  application of Lemma \ref{extend}(1) to $S', S, \cP', \cP$, we find that $\tilde{r}(\cP)=r'(S)=r'(S')$. Since $\cP'$ is the coarsest optimal partition of $S'$, part (2) of the same lemma yields that $\cP$ is the coarsest optimal partition of $S$. Since the rank and cardinality of $B=B'$ equal $r'(S')=r'(S)$, the output is correct in this case.

If there is no $P'\in\cP'$ so that $r(P'+e)=r(P')$, then the algorithm proceeds to find some largest set $J\subseteq B'$ such that $2r(J+e)-1=|J+e|$. We will show that the output $\cP:= \cP'\vee\{J+e\}$, $B:= B'+e$ is again correct. We first argue that $r(S)=r'(S')+1$. Consider the coarsest optimal partition $\cQ$ of $S$, and put $\cQ':=\cQ\wedge\{S'\}$. If $r'(S)=r'(S')$, then an application of Lemma \ref{extend}(2) to $S', S, \cQ', \cQ$ shows that $\cQ'=\cP'$, and using part (1) we find that for the element $Q\in \cQ$ so that $e\in Q$, we have $r(Q-e)=r(Q)$. Then $P':=Q-e\in \cQ'=\cP'$ has $r(P'+e)=r(P')$, a contradiction. So $r(S)=r'(S')+1$. By submodularity of $r'$, we have 
$r'(S')+r'(B)\geq r'(S)+r'(B').$
It follows that $r'(B)=r(B')+1=|B'|+1=|B|$. Since $J+e\subseteq B$, we have 
$$r'(J+e)=|J+e|=2r(J+e)-1=\tilde{r}(\{J+e\})$$
so that $\{J+e\}$ is an optimal partition for $J+e$. By Lemma \ref{lattice}, and noting that
$$r'(S')+r'(J+e)=r(S)+r'(J),$$
if follows that $\cP= \cP'\vee\{J+e\}$ is optimal for $S$. Let $\cQ:=\cP\wedge\{B\}$. Then $\cQ$ is an optimal partition of $B$ by Lemma \ref{extend}(1) applied to $B, S, \cQ, \cP$. If there is a coarser optimal partition $\cQ'$ of $B$, then there is a part $Q'\in \cQ'$ that is not contained in any part of $\cQ$, hence is not contained in any part of $\cP$. If $e\not\in Q'$, then $\cP'\vee\{Q'\}$ is a coarser optimal partition of $S'$ than $\cP'$, a contradiction. So $e\in Q'$, hence $J+e$ is properly contained in $Q'$, and $|Q'|=\tilde{r}(\{Q'\})=2r(Q')-1$. But then $J':=Q'-e$ contradicts the choice of $J$. Therefore $\cQ$ is the coarsest optimal partition of $B$. By Lemma \ref{extend}(2) applied to $B, S, \cQ, \cP$, it follows that $\cP$ is the coarsest optimal partition of $S$.

It remains to show that the algorithm takes polynomial time,
and $O(nk+ k^3\log(k))$ rank evaluations in $M$. We only count the number of rank
evaluations in $M$, the remaining work clearly being less significant
in comparison. Not counting the rank evaluations used to recursively compute $B', \cP'$ for $S':=S-e$, the algorithm performs at most $\#\cP'\leq r'(S')\leq r'(S)=k$ rank evaluations to test if $r(P'+e)=r(P')$ for a $P'\in \cP'$. If so, then $|B|=|B'|$ and no further rank evaluations are performed. If not, then $|B|=|B'|+1$ and for the calculation of $J$ one may use submodular
function minimisation as in Lemma \ref{test}, taking $O(k^2\log(k))$
evaluations of the submodular function $f$, each evaluation of $f$ taking
one rank evaluation in $M$. The depth of the recursion equals $|S|$,
with at most $k$ rank evaluations at each depth for testing and
$O(k^2\log(k))$ for computing $J$ at each depth where the cardinality
of the basis $B$ increases. Thus the entire recursive algorithm will
see at most $|S|\cdot k + |B| \cdot O(k^2\log(k))= O(nk+k^3\log(k))$ rank evaluations in $M$, as required.
 \endproof
\subsection*{Remark} It was pointed out to us by an anonymous referee that the matroid $M'$ arises as a sub-matroid of the {\em Dilworth truncation} $N$ of the direct sum of two copies $M_1$ and $M_2$ of $M$. That is, $$r'(F)=r_N\left(\{\{f_1, f_2\}: f\in F\}\right),$$ where $e_i\in E(M_i)$ denotes the copy of $e\in E(M)$. See also \cite[Ch. 48]{SchrijverCO_B} for an algorithm to evaluate the rank function of a Dilworth truncation in the more general context of submodular functions. In that reference, 
$$r'(U)=\hat{f}(U):=\min\left\{\sum_{P\in \mathcal{P}} f(P): \mathcal{P} \text{ a partition of } U\right\}$$
where we choose $f(P):=2r(P)-1$ as the base submodular function.
\bibliographystyle{alpha}
\bibliography{math}

\end{document}